\documentclass[11pt,]{amsart}
\usepackage{amsmath}
\usepackage{amsxtra}
\usepackage{amscd}
\usepackage{amsthm}
\usepackage{amsfonts}
\usepackage{amssymb}
\usepackage{eucal}
\usepackage{epsfig}
\usepackage{graphics}
\usepackage{accents}

\usepackage{mathtools}
\usepackage{etex}
\usepackage{tikz}
\usetikzlibrary{decorations.markings,shapes,matrix,calc}
\usepackage{here} 
\textwidth=18cm
\textheight=22cm
\hoffset=-2.8cm
\baselineskip=18pt plus 3pt

\numberwithin{equation}{section}
\newtheorem{thm}{Theorem}[section]
\newtheorem{prop}[thm]{Proposition}
\newtheorem{lem}[thm]{Lemma}

\newtheorem{cor}[thm]{Corollary}

\newcommand{\nn}{\nonumber}

\newcommand{\ds}[1]{\displaystyle #1}

\newcommand{\C}{{\mathbb C}}
\newcommand{\Z}{{\mathbb Z}}


\newcommand{\E}{{\mathcal E}}

\newcommand{\cH}{\mathcal H}



\newcommand{\uh}{\hat{u}}
\newcommand{\vh}{\hat{v}}
\newcommand{\Uh}{\hat{U}}
\newcommand{\Vh}{\hat{V}}
\newcommand{\hh}{\hat{h}}



\newcommand{\GS}{\mathfrak{S}}
\newcommand{\gl}{\mathfrak{gl}}

\newcommand{\slt}{\mathfrak{sl}_2}

\newcommand{\ssh}{\textsf{h}}

\newcommand{\bph}{{\boldsymbol \varphi}}




\newcommand{\ssa}{\textsf{a}}
\newcommand{\ssq}{\textsf{q}}
\newcommand{\ssx}{\textsf{x}}


\newcommand{\End}{\mathop{\rm End}}

\newcommand{\Sym}{\mathrm{Sym}}

\newcommand{\wt}{{\rm wt}\,}



\newcommand{\al}{\alpha}


\newcommand{\gh}{\widehat{\mathfrak{g}}}
\newcommand{\g}{\mathfrak{g}}

\newcommand{\trr}{\tilde{r}}
\newcommand{\tu}{\tilde{u}}
\newcommand{\tv}{\tilde{v}}

\newcommand{\WD}{\mathcal{W}D(2,1;\alpha)}
\newcommand{\baa}{\mathbf{a}}

\newcommand{\brr}{\mathbf{r}}

\begin{document}
\begin{title}[Deformation of $\widehat{\mathfrak{sl}}_2$ coset VOA]
{Towards trigonometric deformation of $\widehat{\mathfrak{sl}}_2$ coset VOA}
\end{title}

\author{B. Feigin, M. Jimbo, and E. Mukhin}
\address{BF: National Research University Higher School of Economics, 
Russian Federation, International Laboratory of Representation Theory 
and \newline Mathematical Physics, Russia, Moscow,  101000,  
Myasnitskaya ul., 20 and Landau Institute for Theoretical Physics,
Russia, Chernogolovka, 142432, pr.Akademika Semenova, 1a
}
\email{bfeigin@gmail.com}
\address{MJ: Department of Mathematics,
Rikkyo University, Toshima-ku, Tokyo 171-8501, Japan}
\email{jimbomm@rikkyo.ac.jp}
\address{EM: Department of Mathematics,
Indiana University-Purdue University-Indianapolis,
402 N.Blackford St., LD 270,
Indianapolis, IN 46202, USA}\email{emukhin@iupui.edu}
\begin{abstract} 
We discuss the quantization of the $\widehat{\mathfrak{sl}}_2$ coset vertex operator algebra 
$\WD$
using the bosonization technique.
We show that after quantization there exist three families of commuting integrals of motion coming from three copies of the quantum toroidal algebra associated to 
${\mathfrak{gl}}_2$.
\end{abstract}

\date{\today}

\maketitle 

\bigskip

\section{Introduction}\label{sec:intro}
In this paper we study the trigonometric deformation of the coset algebra 
in the case of $\slt$.

\medskip

Let $\g$ be a finite-dimensional semisimple Lie algebra. Fix three levels 
$k_1,k_2,k_3$ such that 
\begin{align}\label{sum} k_1+k_2+k_3=-2h^\vee, \end{align}
 where $h^\vee$ is the dual Coxeter number of $\g$. By definition, the 
vacuum representation of the coset algebra $C(\g;k_1,k_2,k_3)$ is the 
space of semi-infinite cohomology $H^{\infty/2}(\gh,\g,V_{k_1}\otimes V_{k_2}\otimes V_{k_3})$, where $V_{k_i}$ are the 
vacuum representations of $\gh$ of 
level $k_i$. In this paper we consider only generic $k_1,k_2,k_3$ 
satisfying \eqref{sum}, 
when the semi-infinite cohomology is non-trivial only in one dimension.

The above definition is symmetric in $k_1,k_2,k_3$. A more standard 
definition of the coset algebra is non-symmetric. Namely, one defines $C(\g;k_1,k_2,k_3)$ to be the space of invariants 
$(V_{k_1}\otimes V_{k_2})^{\g\otimes \C[t]}$. 
Informally, the coset algebra is the commutant of diagonal $\gh$ 
inside $U\gh_{k_1}\otimes U\gh_{k_2}$,  
where $U\gh_{k_i}$
is the universal enveloping algebra of $\gh$ on level $k_i$.

There is yet another definition of the coset algebra $C(\g;k_1,k_2,k_3)$ as a subalgebra of $V_{k_1}$.
For example, in the limit $k_2\to \infty$, the coset algebra $C(\g; k_1,\infty,\infty)$ is identified with the subalgebra of invariants $(V_{k_1})^\g$. For general $k_1,k_2,k_3$ one needs to use the machinery of the screenings. This is the construction used in this paper, so let us describe the main points.

\medskip

Quite generally, given a vertex operator algebra (VOA) $V$ and a set of fields $s_i(z)$, $i=1,\dots,n$, called screening currents, one defines screenings $S_i=\int s_i(z) dz$ and the algebra $V(S_1,\dots,S_n)$ 
consisting of
all local fields which commute with all screenings: 
$V(S_1,\dots,S_n)=\{a(z)\ |\ [a(z),S_i]=0,\ i=1,\dots,n\}$. The algebra $V(S_1,\dots,S_n)$ is closed under 
the operator product, and if it contains a Virasoro current, then it is a VOA.

Let us assume that $\g$ is simply laced.
Let $\widehat{\mathfrak{h}}\subset\gh$ be the Heisenberg algebra generated by Cartan currents and let $H$ be a lattice VOA of $\widehat{\mathfrak{h}}$. Let $s_i(z)=\exp(c_+\al_i(z))$, $i=1,\dots, n$, $n={\rm{rk}}(\g)$, $c_+\in\C^\times$, be the vertex operators corresponding to rescaled simple roots of $\g$. 
Then $H(S_1,\dots,S_n)=W(\g)$ is the W-algebra which can be alternatively obtained by the quantum Drinfeld-Sokolov reduction of 
$U\gh$ with respect to a maximal 
nilpotent subalgebra $\widehat{\mathfrak{n}}$, see \cite{FF1}. 
 Consider the dual screening currents $s_i^\vee(z)=\exp(-c_-\al_i(z))$,  and dual screenings 
$S_i^\vee=\int s_i^\vee(z) dz$, where $c_-\in\C^\times$ is another parameter.

Consider the algebra $\tilde V= V_{k_1}\otimes H$ with screening currents $\tilde s_i(z)=e_i(z)s_i(z)$, $\tilde s_i^\vee(z)=f_i(z)s_i^\vee(z)$  where $e_i(z)$, $f_i(z)$ are 
the  $\gh$ currents corresponding to the 
positive and negative simple root vectors.
Let $\tilde S_i=\int \tilde s_i(z) dz$, $\tilde S_i^\vee=\int \tilde s_i^\vee(z) dz$ be the corresponding screenings.
Then $[\tilde S_i,\tilde S_j^\vee]=0$ and 
$\tilde V (\tilde S_1,\dots, \tilde S_n,\tilde S_1^\vee,\dots, \tilde S_n^\vee)$ is the coset algebra $C(\g;k_1,k_2,k_3)$ together with the diagonal copy of the $\hat{\mathfrak{h}}$, where $c_+,c_-$ and $k_2,k_3$ are related by $(c_+)^2(k_2+h^\vee)=1$, $(c_-)^2(k_3+h^\vee)=-1$.

In particular, using the Wakimoto realization  of $V_{k_1}$, see  \cite{W}, \cite{FF}, we obtain a free field realization of the coset algebra.

 A similar, more involved, construction exists in the non-simply laced and in the supersymmetric cases.

\medskip

On mathematically rigorous level, the correspondence between three different definitions of the coset algebra is not fully established.
For the latest results, see \cite{ACL}.

\medskip

There exists a deep connection between VOAs and quantum groups. In particular, there exists a relationship between
the quasi-tensor category of representations of a VOA and the tensor category of representations of a quantum group. The screening construction is a reflection of this phenomenon. In general, the screenings which define the VOA generate (in some appropriate sense) the nilpotent part of the quantum group. In our situation, the coset algebra is a reduction of $V_{k_1}\otimes V_{k_2}\otimes V_{k_3}$, therefore we expect to have screenings which generate the nilpotent parts of three commuting quantum groups $U_{q^{(j)}}\g$ with $q^{(j)}=\exp(2\pi i/(k_j+h^\vee))$, $j=1,2,3$.

This is indeed so in the case of $\slt$, see \cite{FS} and Section \ref{FS}. 
The algebra 
$U(\widehat{\mathfrak{sl}}_2)_{k_1}$ has the 
Wakimoto realization through three
free fields. Then there are three
screening currents: 
$\tilde s_1(z)$, $\tilde s_1^\vee(z)$,  
and the Wakimoto screening current. 
These screening currents have the form $\theta_i(z)\exp(\tilde \theta_i(z))$, 
where $\theta_i(z), \tilde \theta_i(z)$ are free fields,  
and the corresponding screenings commute. We call these 
screenings ``bosonic". 

Given an embeddings of VOAs  $W_1\subset W_2$, 
one can ask whether it is possible to find all screenings: 
that is, to find all fields $s(z)$ in $W_2$ such that
$\int s(z)dz$ commutes with $W_1$. 
Unfortunately, we know no results 
in this direction. However, in the present situation, we
observe a dual ``fermionic picture". 
Namely, there are three more screening currents 
which anticommute,  
and the corresponding screenings form 
the 
nilpotent part of the 
quantum Lie superalgebra of type $D(2,1;\alpha)$. The fermionic currents are simple exponentials. In particular, we expect that the $\slt$ coset algebra can be obtained by the 
quantum Drinfeld-Sokolov reduction from $\hat{D}(2,1;\alpha)$.

\medskip

The next important ingredient is the description of integrable systems 
---the commutative subalgebras of vertex operator algebras. 
Such subalgebras are also defined as commutants of a set of screenings. 
In general
the set of local fields commuting with all screenings may be trivial, 
but under certain circumstances there exists an 
infinite series of descendants 
$a_j(z)$ of the vacuum
such that $I_j=\int a_j(z)dz$ commute with all screenings. 
The operators $I_j$ are called local integrals of motion. 
In all known cases, the set of screenings leading to an infinite set of local 
integrals of motion
corresponds to an affine quantum group. 

There exist also non-local integrals of motion given by suitable integrals of products of screening currents, see \cite{BLZ}.
Conjecturally, the cycles of integration defining the non-local integrals of motion
correspond to elements of the center of the affine quantum group at the critical level.

In the case of $\slt$ this is achieved by 
adjoining
one more fermionic screening, 
or three more bosonic screenings. 
Four fermionic screenings correspond to nilpotent part of 
the quantum affine superalgebra
$\hat{D}(2,1;\alpha)$, 
and three pairs of bosonic screenings correspond 
to the nilpotent parts of three commuting copies 
of the quantum affine algebra
$\widehat{\mathfrak{sl}}_2$. 
For the latest developments about the integrals of motion, 
see \cite{BL}, \cite{BKL}.

\medskip

In our previous papers \cite{FJMM}, \cite{FJMM1}, \cite{FJM}
we have studied the trigonometric deformation of W algebras $W\mathfrak{sl}_n$.
The advantage of deformation is that the non-local integrals of motion can be obtained from 
the R-matrix formalism using quantum toroidal algebras. In particular, the non-local 
integrals of motion 
can be described by appropriate explicit integrals, we can prove they commute and, moreover, 
we can study their spectrum by the machinery of Bethe ansatz. 
The aim of the present paper is to discuss
the trigonometric deformation of the picture mentioned above for the coset W algebra.

We use four free fields and a three dimensional lattice to construct the following objects.
\begin{itemize}
\item Four fermionic screening currents  which are simple exponentials. We call the corresponding screenings $\sigma_i$, 
$i=0,1,2,3$.  

\item Three pairs of bosonic screening currents which are sums of two exponential terms.The corresponding screenings $\rho_i,\tau_i$, $i=1,2,3$, anticommute with fermionic screenings and, in addition, satisfy $[\rho_i,\rho_j]=[\tau_i,\tau_j]=0$ for all $i,j$, and $[\rho_i,\tau_j]=0$ for $i\neq j$.

\item Three quantum affine algebras 
$U_{q^{(i)}}\widehat{\mathfrak{sl}}_2$ 
on level $q^{k_i}$, $i=1,2,3,$ in Wakimoto realization. 
Algebra $U_{q^{(i)}}\widehat{\mathfrak{sl}}_2$ 
commutes with one bosonic screening $\rho_i$ and two fermionic screenings $\sigma_j$, $j\neq i,0$. The screening currents 
corresponding to $\rho_j$, $j\neq i$ are 
obtained by dressing the $E(z)$ and $F(z)$ currents of  $U_q^{(i)}\widehat{\mathfrak{sl}}_2$ by 
an appropriate free field.

\item Three quantum toroidal $\gl_2$ algebras $\E_{2,i}(q^{-k_i-2},q^2,q^{k_i})$ on level 
$q^{k_i}$, $i=1,2,3$. The screening currents 
corresponding to $\rho_i$ and $\tau_i$ are obtained by dressing the currents 
$F_1(z)$ and $F_0(z)$ of $\E_{2,i}$.

\item Three sets of integrals of motion $\mathbb{G}_{i,M}^\nu$, $i=1,2,3$, $\nu=0,1$, $M\in\Z_{\geq 1}$ 
coming from $\E_{2,i}$. All integrals of motion commute among themselves.
\end{itemize}

We call the commutativity of the three sets of integrals the triality. 
The triality is very different from standard duality described in \cite{FJM2}. 
In some sense, the triality corresponds to the commutativity of three sets of integrals of motion coming from the same (unknown) algebra. We expect the existence of 
yet another set of integrals of motion coming 
from quantum toroidal $D(2,1;\alpha)$ 
which should be in the standard duality with the three sets of integrals described in this paper.
\medskip

There are several open questions: 
(1) to give an explicit description of the deformed coset algebra, that is,
to construct local currents which commute with three fermionic screenings; 
(2) to find non-local integrals of motion obtained from quantum toroidal $D(2,1;\alpha)$;
 and (3) to obtain local integrals of motion. We hope to address these issues
in some other occasion.
\medskip

The paper is constructed as follows. In Section \ref{FS} we recall some formulas from \cite{FS} in the non-deformed picture.
In Section \ref{deformed KS} we give the quantization of these formulas. We construct three actions of quantum affine $\slt$ and extend them to the action of quantum toroidal $\mathfrak{gl}_2$ in Section \ref{sec:toroidal gl2}. That allows us to show that the three integrable systems commute in Section \ref{sec:IM}. In Section \ref{sec:D21} we discuss the connection to quantum toroidal $D(2,1;\alpha)$. The Appendices contain some technical formulas and computations.

\section{W algebra $\WD$}\label{FS}

The W algebra $\WD$, termed also ``corner-brane $W$ algebra'' \cite{LZ}, 
has many intriguing features. From conformal field theory (CFT)
point of view, it can be viewed as a coset theory 
$(\mathfrak{sl}_2)_{k_1}\times
(\mathfrak{sl}_2)_{k_2}/(\mathfrak{sl}_2)_{k_1+k_2}$
with generic parameters $k_1,k_2$. 
In \cite{FS} it was identified with a quantum Hamiltonian reduction 
of the exceptional affine Lie superalgebra $\hat{D}(2,1;\alpha)$. 
Algebra $\WD$ governs the CFT limit of an integrable 
quantum field theory introduced in \cite{Fa}. 
In a remarkable work \cite{BL} the ODE/IM correspondence for this CFT 
model was found to be related to 
generalized hypergeometric Opers, and was further extended to include 
Fateev's massive integrable field theory.

The aim of this section is to 
summarize basic aspects of $\WD$ following \cite{FS}
and motivate the construction in the main text. 

Algebra $\WD$ is realized in the Fock space of three 
bosonic free fields
$\varphi_i(z)=Q_i+\varphi_{i,0}\log z-
\sum_{n\neq 0}\frac{\varphi_{i,n}}{n}z^{-n}$ ($i=1,2,3)$
satisfying 
$[\varphi_{i,0},Q_j]=\delta_{i,j}$, 
$[\varphi_{i,m},\varphi_{j,n}]=\delta_{i,j}n \delta_{m+n,0}$.
The total Fock space decomposes into subspaces
\begin{align*}
\bar\cH=\bigoplus_{\beta}\bar\cH_{\beta}\,,
\quad
 \bar\cH_{\beta}=\C[\varphi_{i,-n}\mid n>0,i=1,2,3]\otimes\C e^\beta\,,
\end{align*}
where the ``momentum'' $\beta$ runs over the three dimensional space
$\oplus_{i=1}^3\C Q_i$. We shall refer to $\bar\cH_{\beta}$'s as 
``sectors''.

Fix complex numbers $k_i$ ($i=1,2,3$) satisfying
$k_1+k_2+k_3=-4$.
Choose vectors $\baa_i\in\C^3$ ($i=1,2,3$) with scalar product 
\begin{align*}
&\baa_i\cdot\baa_i=1\,,\quad
\baa_1\cdot\baa_2=k_3+1\quad \&\, cycl.. 
\end{align*}
Here and after we shall use the symbol ``$\& \, cycl.$''
to imply all equations 
obtained by cyclically permuting the indices $(1,2,3)$. 
Using the vector notation $\bph(z)=(\varphi_1(z),\varphi_2(z),\varphi_3(z))$, 
we introduce fermionic screening operators 
 $\sigma_i$ ($i=1,2,3$) by  
\begin{align}
&\sigma_i=\int :e^{\baa_i\cdot\bph(z)}:\, dz\,.
\label{CFTf-scr}
\end{align}

Next let 
$\brr_1=-\frac{1}{k_1+2}(\baa_2+\baa_3)\quad \&\ cycl.$.  
These vectors are mutually orthogonal. 
We introduce bosonic screening operators $\rho_i$ ($i=1,2,3$) by  
\begin{align}
\rho_1=\int :(\baa_3\!\cdot\!\partial\bph(z))
e^{\brr_1\cdot\bph(z)}: \,dz\quad
\&\ cycl..
\label{CFTb-scr}
\end{align}

Now let  $\alpha=k_1/(k_3+2)$.  
The W algebra $\WD$ is a vertex subalgebra in the vacuum sector $\bar\cH_{0}$. 
It is defined either as the intersection of the kernels of $\sigma_i$, 
or that of the kernels of $\rho_i$ \cite{FS},  
\begin{align*}
\WD=\bigcap_{i=1}^3\mathrm{Ker}\,\Bigl(\sigma_i\bigl|_{\bar\cH_0}\Bigr)
=\bigcap_{i=1}^3\mathrm{Ker}\,\Bigl(\rho_i\bigl|_{\bar\cH_0}\Bigr)
\quad \subset  \bar\cH_{0}\,.
\end{align*}
As is well known, W algebras associated with simple Lie algebras
admit two descriptions in terms of 
quantum groups which are Langlands dual to each other. 
In the present case the relevant quantum groups look rather different.
The fermionic screening operators $\{\sigma_i\}_{i=1,2,3}$ 
generate  the nilpotent part $U^+_\ssq D(2,1;\alpha)$ 
of the quantum group
associated with the exceptional Lie superalgebra
$D(2,1;\alpha)$ (whence the notation $\WD$).
In turn, the bosonic screening operators
$\{\rho_i\}_{i=1,2,3}$ generate 
$U_{\ssq_1}^+\mathfrak{sl}_2\otimes U_{\ssq_2}^+\mathfrak{sl}_2\otimes 
U_{\ssq_3}^+\mathfrak{sl}_2$ 
where $\ssq_i=\exp\bigl(2\pi i/(k_i+2)\bigr)$.

It is easy to see that $\WD$ contains the Virasoro current
\begin{align*}
T_2(z)=\frac{1}{2}\sum_{i=1}^3\frac{k_i+2}{2} :\partial r_i(z)^2:+ 
\frac{1}{2}\sum_{i=1}^3 \partial^2 r_i(z)\,,
\quad r_i(z)=\brr_i\cdot\bph(z)\,,
\end{align*}
with central charge $c=3-\sum_{i=1}^3\frac{6}{k_i+2}$.
As vertex algebra, $\WD$ is conjecturally  
generated by $T_2(z)$ and a  current $T_4(z)$
of spin $4$. 
An explicit expression for $T_4(z)$ can be found in \cite{FS}, \cite{LZ}. 

In order to discuss integrals of motion, 
we need additional screening operators.
Let $\sigma_0$ be given by the same formula \eqref{CFTf-scr} 
with $\baa_0=-\sum_{i=1}^3\baa_i$. 
Let further 
\begin{align*}
\tau_i=\int :(\baa_i\!\cdot\!\partial\bph(z))
e^{-\brr_i\cdot\bph(z)}: \,dz\,
\end{align*}
for $i=1,2,3$. Then $\{\sigma_i\}_{i=0,1,2,3}$ generate
 $U^+_\ssq \hat{D}(2,1;\alpha)$, and 
 $\{\rho_i,\tau_i\}_{i=1,2,3}$ generate 
$U_{\ssq_1}^+\widehat{\mathfrak{sl}}_2\otimes 
U_{\ssq_2}^+\widehat{\mathfrak{sl}}_2\otimes 
U_{\ssq_3}^+\widehat{\mathfrak{sl}}_2$. 
One expects that their joint kernels coincide,
\begin{align*}
\bigcap_{i=0}^3\mathrm{Ker}\,\bigl(\sigma_i\bigl|_{\bar\cH_0}\bigr)
=\bigcap_{i=1}^3\Bigl(
\mathrm{Ker}\,\bigl(\rho_i\bigl|_{\bar\cH_0}\bigr)
\cap \mathrm{Ker}\,\bigl(\tau_i\bigl|_{\bar\cH_0}\bigr)\Bigr)\,,
\end{align*}
giving a commutative algebra generated by 
integrals of local densities 
\begin{align*}
\mathbb{I}_n=\int P_{n+1}(z)\,dz\quad (n=1,3,5,\ldots).
\end{align*}
They are called ``local'' integrals of motion. 
One expects also the existence of an infinite family of
``non-local'' integrals of motion given by
multiple integrals of products of 
screening currents. The bosonic ones  associated with 
$U_{\ssq_1}^+\widehat{\mathfrak{sl}}_2\otimes 
U_{\ssq_2}^+\widehat{\mathfrak{sl}}_2\otimes 
U_{\ssq_3}^+\widehat{\mathfrak{sl}}_2$ 
are discussed in \cite{BL}. 
Little is known about non-local integrals of motion associated with 
$U^+_{\ssq}\hat{D}(2,1;\alpha)$.

\vskip 2cm

\section{Deformed Screening Currents}\label{deformed KS}\quad
In this section, we $q$-deform the screening currents 
considered in the previous section. 
We fix $q\in\C^\times$ satisfying $|q|<1$, and use the standard symbol 
$[x]=(q^x-q^{-x})/(q-q^{-1})$.

Unlike the conformal case, we use here {\it four} free fields 
\begin{align*}
&a_i(z)=Q_{a_i}+a_{i,0}\log z-\sum_{n\neq 0}\frac{a_{i,n}}{n}z^{-n} \,,
\quad i=0,1,2,3.
\end{align*}
We choose $\{a_{i,n}\mid n\neq0, i=0,1,2,3\}$ to be an
independent set of oscillators. Concerning the zero modes, however, 
we assume linear relations 
\begin{align*}
&a_{0,0}+a_{1,0}+a_{2,0}+a_{3,0}=0\,,\quad
Q_{a_0}+Q_{a_1}+Q_{a_2}+Q_{a_3}=0\,.
\end{align*}
Hence the entire Fock space decomposes into sectors
\begin{align*}
 &\cH=\bigoplus_{\beta}\cH_\beta\,,
\quad \cH_{\beta}=\C[a_{i,-n}\mid n>0,i=0,1,2,3]\otimes \C e^\beta\,,
\end{align*}
with the momentum $\beta$ running over 
the same three dimensional space $\oplus_{i=1}^3\C Q_{a_i}$
as in the CFT case.

We set the following commutation relations among the Fourier modes,
\begin{align}
&[a_{i,n},a_{i,-n}]=n\,,\quad [a_{i,0}, Q_{a_i}]=1\,,
\label{aii}\\
&[a_{0,n},a_{i,-n}]=n\frac{[(k_i+1)n]}{[n]}\,,
\quad  [a_{0,0}, Q_{a_i}]=k_i+1\,,
\label{a01}
\\
&[a_{1,n},a_{2,-n}]=n\frac{[(k_3+1)n]}{[n]}\,,\quad 
 [a_{1,0}, Q_{a_2}]=k_3+1\quad \&\ cycl..
\label{a12}
\end{align}
All other commutators are zero.

Fermionic screening currents in the deformed case  
are still pure exponentials,
\begin{align}
&\sigma_i(z)=:e^{a_i(z)}:\,,
\quad i=0,1,2,3.
\label{sig-q}
\end{align}
Eq. \eqref{aii} ensures that the $\sigma_i(z)$'s are ordinary fermions. 
On the other hand, deformed bosonic screening currents
 $\rho_i(z)$, $\tau_i(z)$ ($i=1,2,3$) are sums of two exponential terms
\begin{align}
&\rho_i(z)=\frac{1}{(q-q^{-1})z}
\bigl(:e^{r^+_i(z)}:-:e^{r^-_i(z)}:\bigr)
\,,
\label{rho-q}\\
&\tau_i(z)=\frac{1}{(q-q^{-1})z}
\bigl(:e^{t^+_i(z)}:-:e^{t^-_i(z)}:\bigr)\,.
\label{tau-q}
\end{align}
The exponents are given by 
\begin{align*}
r^\pm_1(z)&=-a_2(k_1+2;z)-a_3(k_1+2;q^{\mp(k_1+2)}z)\quad \&\ cycl.,
\\
t^\pm_i(z)&
=-a_0(k_i+2;z)-a_i(k_i+2;q^{\mp(k_i+2)}z)\,,
\end{align*}
where 
\begin{align*}
a_i(N;z) =\frac{1}{N}(Q_{a_i}+a_{i,0}\log z)
-\sum_{n\neq0}\frac{[n]}{[N n]}\frac{a_{i,n}}{n}z^{-n}\,.
\end{align*}
It is easy to see that \eqref{sig-q}, \eqref{rho-q}, \eqref{tau-q} 
tend to the CFT counterparts in the limit $q\to1$.  
Screening operators are defined by
\begin{align*}
&\sigma_i=\int \sigma_i(z)\,dz\,,
\quad
\rho_j=\int \rho_j(z)\,dz\,, \quad 
\tau_j=\int \tau_j(z)\,dz\,. 
\end{align*}
These operators make sense on sectors $\cH_\beta$ with appropriate $\beta$. 
More precisely, 
$\sigma_i$ is well-defined if $[a_{i,0},\beta]\in\Z$, 
while $\rho_1,\tau_1$ are well-defined if $[a_{2,0}+a_{3,0},\beta]\in(k_1+2)\Z$. 

\begin{prop}\label{prop:q-screening}
The following relations hold true
\begin{align*}
 &[\sigma_i,\rho_j]_+=[\sigma_i,\tau_j]_+=0\quad (i=0,1,2,3,\ j=1,2,3)\,,\\
&[\rho_i,\rho_j]=[\rho_i,\tau_j]=[\tau_i,\tau_j]=0\, 
\quad (i,j=1,2,3, i\neq j)\,,
\end{align*}
on each sector $\cH_\beta$ such that the left hand sides are defined.
\end{prop}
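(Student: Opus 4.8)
The plan is to reduce all the claimed (anti)commutation relations to contour deformation arguments, for which the essential input is the behaviour of the operator products of the screening currents. The standard mechanism is this: for two currents $x(z)$, $y(w)$ whose OPE $x(z)y(w)$ has the form $f(z/w)\,:x(z)y(w):$ with $f$ a rational (or, after summing the two exponential terms, a finite) function, one has $[\,\int x(z)dz,\int y(w)dw\,]_\pm=0$ on a given sector provided that (i) $x(z)y(w)\mp y(w)x(z)$, as an operator-valued distribution, is a total $w$-derivative of something single-valued, or equivalently (ii) the function $f(z/w)$ and the corresponding function for the reversed product differ only by the position of poles, and the combination that enters is a rational function of $z/w$ with a single-valued primitive whose residues vanish. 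So the first step is to record, for each pair among $\{\sigma_i(z)\}_{i=0}^3$, $\{\rho_j(z),\tau_j(z)\}_{j=1}^3$, the explicit two-point function coming from \eqref{aii}--\eqref{a12}, i.e. compute $\langle a_\bullet(z)a_\bullet(w)\rangle$ and exponentiate.

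The second step is to organize the pairs. The fermion-fermion anticommutators $[\sigma_i,\sigma_j]_+=0$ (implicit in ``ordinary fermions'') follow from \eqref{aii} together with \eqref{a01}, \eqref{a12}: the exponent cross-contraction $\langle a_i(z)a_j(w)\rangle$ produces a prefactor which, for $i\neq j$, is a positive integer power of $(z-w)$ (coming from $[(k_\bullet+1)n]/[n]$ evaluated against the $\log$ of the $(1-w/z)$ expansion — one should check the zero mode shifts $k_\bullet+1$ are such that the current is genuinely a local fermion on the relevant sector), and for $i=j$ it produces exactly $(z-w)$, so $\sigma_i(z)\sigma_j(w)=-\sigma_j(w)\sigma_i(z)$ up to a single-valued factor with no residue, and the double contour integral vanishes. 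For the mixed pairs $[\sigma_i,\rho_j]_+$ and $[\sigma_i,\tau_j]_+$: here $\rho_j(z)$, $\tau_j(z)$ are each a difference of two exponentials with shifted arguments $q^{\mp(k_j+2)}z$, and the point is that the OPE of $\sigma_i(w)$ with $:e^{r_j^+(z)}:$ versus with $:e^{r_j^-(z)}:$ differ precisely by this shift; after multiplying by the prefactor $1/((q-q^{-1})z)$ and summing, the $w$-integrand becomes a total $z$-derivative (this is the standard ``$q$-exact'' or telescoping structure built into the definition of $a_i(N;z)$), killing the contour integral. The bosonic-bosonic relations $[\rho_i,\rho_j]=[\rho_i,\tau_j]=[\tau_i,\tau_j]=0$ for $i\neq j$ are the easiest: by \eqref{a12} (and the analogous statement for $\tau$) the fields entering $\rho_i,\tau_i$ and $\rho_j,\tau_j$ for $i\neq j$ have cross-contractions that are again integer powers of $(z-w)$ (indeed often trivial), so the currents literally commute as fields up to single-valued factors and the contour argument is immediate.

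The third step — and the one I expect to be the genuine obstacle — is the mixed fermionic-bosonic case, because there the cancellation is not between the two terms of a single $\rho$ or $\tau$ alone, nor a simple power of $(z-w)$, but a subtler identity. Concretely, writing $\sigma_i(w):e^{r_j^\pm(z)}: = g^\pm(w/z)\,:\sigma_i(w)e^{r_j^\pm(z)}:$ and similarly for the reversed product, one must verify that
\begin{align*}
\frac{1}{(q-q^{-1})z}\Bigl(g^+(w/z)-g^-(w/z)\Bigr)
\end{align*}
together with its ``reversed'' counterpart combine, after the anticommutator sign, into the $z$-derivative of a single-valued function. The delicate points are: (a) tracking the zero-mode contributions, since $[a_{0,0},Q_{a_j}]=k_j+1$ etc. shift the monodromy and one must confirm that on the sectors where both $\sigma_i$ and $\rho_j$/$\tau_j$ are defined the relevant exponents are integral so that only a rational function remains; (b) checking that the poles in $w/z$ that could obstruct the contour deformation are located exactly at $w=q^{\pm(k_j+2)}z$ and cancel between the $+$ and $-$ terms, which is where the specific shift $q^{\mp(k_j+2)}$ in the definition of $r_j^\pm(z)$, $t_i^\pm(z)$ is used; and (c) making sure the $\log z$ branch in $a_i(N;z)$ does not spoil single-valuedness of the primitive. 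I would handle this by doing one representative case in full — say $[\sigma_1,\rho_2]_+$ — reducing it to the elementary identity that $(1-q^{2c}x)-(1-q^{-2c}x)$ factors suitably against the prefactor, and then invoking ``$\&\ cycl.$'' plus the parallel structure of $\tau$ versus $\rho$ to dispose of the remaining cases; the $\sigma_0$ case is then obtained from $\baa_0=-\sum\baa_i$, i.e. by formally ``adding up'' the contractions, which changes signs but not the vanishing. Finally I would remark that all manipulations are legitimate on $\cH_\beta$ precisely under the integrality conditions stated before the Proposition ($[a_{i,0},\beta]\in\Z$, $[a_{2,0}+a_{3,0},\beta]\in(k_1+2)\Z$ and cyclically), which is exactly the hypothesis ``such that the left hand sides are defined''.
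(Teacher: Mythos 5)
Your treatment of the mixed fermionic--bosonic case is essentially the paper's argument: compute the contraction of $\sigma_i(w)$ with each exponential term of $\rho_j(z)$ or $\tau_j(z)$, observe that the anticommutator collapses to residues at the simple poles, and cancel the residue from the $+$ term against that from the $-$ term after a multiplicative rescaling of the integration variable, using precisely the identities of the type $r_1^+(q^{k_1+2}z)-r_1^-(q^{-k_1-2}z)=a_2(q^{-1}z)-a_2(qz)$ built into the shifted arguments $q^{\mp(k_j+2)}z$. (A small imprecision: the obstruction poles for, say, $\rho_1^\pm(z)\sigma_2(w)$ sit at $w=q^{\mp(k_1+1)}z$, and the rescaling by $q^{\pm(k_1+2)}$ is what aligns the two residue terms; also, the relations $[\sigma_i,\sigma_j]_+=0$ that you discuss at length are not part of the Proposition.)

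There is, however, a genuine error in your handling of the bosonic--bosonic relations $[\rho_i,\rho_j]=[\rho_i,\tau_j]=[\tau_i,\tau_j]=0$ ($i\neq j$). You claim these are ``the easiest'' because the cross-contractions are ``integer powers of $(z-w)$ (indeed often trivial), so the currents literally commute as fields.'' This is false: while the opposite-sign contractions such as $\rho_1^\pm(z)\tau_2^\mp(w)$ are indeed trivial, the same-sign ones are rational functions with genuine simple poles, e.g.\ $\rho^\pm_{1}(z)\rho^\pm_2(w)=q^{\mp1}\frac{z-q^{\mp k_2}w}{z-q^{\mp (k_2+2)}w}:e^{r_1^\pm(z)+r_2^\pm(w)}:$ and $\tau^\mp_{1}(z)\tau^\pm_2(w)=q^{\mp1}\frac{z-q^{\pm (k_3+2)}w}{z-q^{\pm k_3}w}:\cdots:$ (Table \ref{tab:rho-tau2}). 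Consequently $[\rho_1(z),\rho_2(w)]$ is a nonzero sum of delta-function terms, and the vanishing of $[\rho_1,\rho_2]$ again requires matching the residue of the $++$ contraction against that of the $--$ contraction after a multiplicative contour shift, via identities such as $r^+_1(w)-r^-_1(w)=-r^+_2(q^{k_2+2}w)+r_2^-(q^{-k_2-2}w)$; in the $[\tau_i,\tau_j]$ case the two residue terms only coincide after an additional rescaling by $p_i=q^{2(k_i+2)}$ (compare the last case in the proof of Theorem \ref{thm:triality}). So the ``immediate'' argument you propose would fail as stated; the correct proof applies to these pairs the very same residue-cancellation mechanism you reserved for the fermion--boson case. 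Once that misclassification is repaired, the proposal aligns with the paper's proof.
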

\begin{proof}
We use contraction rules given in 
Table \ref{tab:art} and Table \ref{tab:rho-tau2} in   
Appendix \ref{sec:contraction},   
along with the following identities.
\begin{align}
&r_1^+(z)-r_1^-(z)=a_3(qz)-a_3(q^{-1}z)\,,
\label{r1r1-1}\\
&r_1^+(q^{k_1+2}z)-r_1^-(q^{-k_1-2}z)=a_2(q^{-1}z)-a_2(qz)\,,
\label{r1r1-2}\\
&t_1^+(z)-t_1^-(z)=a_1(qz)-a_1(q^{-1}z)\,,
\label{t1t1-1}\\
&t_1^+(q^{k_1+2}z)-t_1^-(q^{-k_1-2}z)=a_0(q^{-1}z)-a_0(qz)\,.
\label{t1t1-2}
\end{align}

As an example let us verify the anti-commutativity between 
$\sigma_2$ and $\rho_1$. 
We find from Table \ref{tab:art} 
\begin{align*}
(q-q^{-1})z\rho_1(z)\cdot\sigma_2(w)
&= 
\frac{1}{q^{-k_1-1}z-w}:e^{r_1^+(z)+a_2(w)}:
-\frac{1}{q^{k_1+1}z-w}:e^{r_1^-(z)+a_2(w)}:\,,
\\
\sigma_2(w)\cdot(q-q^{-1})z\rho_1(z) 
&=
\frac{1}{w-q^{-k_1-1}z}:e^{r_1^+(z)+a_2(w)}:
-\frac{1}{w-q^{k_1+1}z}:e^{r_1^-(z)+a_2(w)}:\,.
\end{align*}
Upon integration and shifting contours, we obtain
\begin{align*}
(q-q^{-1})[\rho_1,\sigma_2]_+
&= \int\frac{dz}{z}\Bigl(:e^{r_1^+(z)+a_2(q^{-k_1-1}z)}:
-:e^{r_1^-(z)+a_2(q^{k_1+1}z)}:\Bigr)\\
&= \int\frac{dz}{z}\Bigl(:e^{r_1^+(q^{k_1+2}z)+a_2(qz)}:
-:e^{r_1^-(q^{-k_1-2}z)+a_2(q^{-1}z)}:\Bigr)\,.
\end{align*}
Now it suffices to apply \eqref{r1r1-2} which implies 
$r_1^+(q^{k_1+2}z)+a_2(qz)=r_1^-(q^{-k_1-2}z)+a_2(q^{-1}z)$. 

The other cases can be verified similarly.
\end{proof}
\bigskip

In the following sections we shall explain the algebraic 
background of these formulas. 
We shall show that the deformed screening currents
are closely connected with generators of quantum toroidal algebras, 
see Proposition \ref{prop:rhoFF} and discussions in Section \ref{sec:D21}.

\bigskip

\section{Quantum Toroidal $\gl_2$}\label{sec:toroidal gl2}
In this section we introduce actions of the quantum toroidal $\mathfrak{gl}_2$
algebra $\E_2(q_1,q_2,q_3)$
on the total Fock space $\cH$. 
For that purpose, we start from the Wakimoto representation of the 
quantum affine algebra $U_q\widehat{\mathfrak{sl}}_2\subset\E_2(q_1,q_2,q_3)$. 
 
The Wakimoto representation is defined on the Fock space of 
three free fields \cite{Sh}. Re-expressing them in terms of 
$a_i(z)$, $i=1,2,3$, we are led to 
a realization of the Wakimoto representation on our Fock space $\cH$.
Leaving the technical details to Appendix \ref{sec:Wakimoto}, 
we present the result below. 

Let us introduce fields $u_1^\pm(z)$, $v_1^\pm(z)$,
\begin{align}
&u^\pm_1(z)=Q_{u_1}+u_{1,0}\log z
\mp a_{3,0}\log q-\sum_{n\neq0}\frac{u^\pm_{1,n}}{n}z^{-n}\,,
\label{u1n}\\
&v^\pm_1(z)=
-\bigl(Q_{u_1}+u_{1,0}\log z\bigr)
\pm a_{2,0}\log q -\sum_{n\neq0}\frac{v^\pm_{1,n}}{n}z^{-n}\,.
\label{v1n}
\end{align}
The oscillator part is given by
\begin{align}
&u^+_{1,n}=
\begin{cases}
\frac{[n]}{[(k_3+2)n]}\Bigl(q^{k_1n}a_{1,n}+q^{-(k_2+2)n}a_{2,n}\Bigr)&(n<0),\\
-\frac{[n]}{[(k_2+2)n]}\Bigl(a_{1,n}+q^{-(k_2+2)n}a_{3,n}\Bigr) & (n>0),\\
\end{cases}
\label{u1pn}\\
&u^-_{1,n}=u^+_{1,n}-(q^n-q^{-n})a_{3,n}\,,
\label{u1nn}\\
&v^-_{1,n}=
\begin{cases}
-\frac{[n]}{[(k_3+2)n]}\Bigl(a_{1,n}+q^{-(k_3+2)n}a_{2,n}\Bigr)&(n<0),\\
\frac{[n]}{[(k_2+2)n]} \Bigl(q^{k_1n}a_{1,n}+q^{-(k_3+2)n}a_{3,n}\Bigr) & (n>0),\\
\end{cases}
\label{v1nn}\\
&v^+_{1,n}=v^-_{1,n}-(q^n-q^{-n})a_{2,n}\,.
\label{v1pn}
\end{align}
The zero mode part is 
\begin{align}
&Q_{u_1}=Q^0_{u_1} +s \bar{Q}_{u_1}\,,
\quad u_{1,0}=u^0_{1,0} +\frac{s}{1+k_1s}\bar{u}_{1,0}\,,
\label{Qu1s}
\end{align}
with
\begin{align}
& Q^0_{u_1}=\frac{1}{k_3+2}\bigl(Q_{a_1}+Q_{a_2}\bigr)\,,
\quad
u^0_{1,0}=-\frac{1}{k_2+2}\bigl(a_{1,0}+a_{3,0}\bigr)\,,
\label{Q0u0}\\
&\bar{Q}_{u_1}=\frac{1}{k_3+2}\bigl(k_1Q_{a_1}-(k_2+2)Q_{a_2}-(k_3+2)Q_{a_3}
\bigr)\,,
\label{barQ}\\
&\bar{u}_{1,0}=\frac{1}{k_2+2}\bigl(
k_1 a_{1,0}-(k_2+2)a_{2,0}-(k_3+2)a_{3,0}\bigr)\,.
\label{baru}
\end{align}
Here  $s$ is an arbitrary parameter. We will make a specific choice
later on, see \eqref{choose-s}.

Set further
\begin{align}
h_{1,0}&=-\frac{1}{1+k_1s}\bar{u}_{1,0}\,,
\label{H10}\\
h_{1,n}&=\frac{1}{n}
\Bigl([k_1n]a_{1,n}-[(k_2+2)n]a_{2,n}-[(k_3+2)n]a_{3,n}\Bigr)
\times
\begin{cases}
-\frac{[n]}{[(k_2+2)n]} & (n>0),\\
\frac{[n]}{[(k_3+2)n]} & (n<0).\\
\end{cases}
\label{h1n}
\end{align}

Let $E_1(z),F_1(z),K_1^{\pm1}, H_{1,n}$ ($n\neq 0$)
be the standard generators of $U_q\widehat{\mathfrak{sl}}_2$
(for our convention see Appendix \ref{sec:toridal}).
\begin{prop}\label{prop:sl2}
The following assignment gives the Wakimoto representation of  $U_q\widehat{\mathfrak{sl}}_2$
of level $q^{k_1}$:
\begin{align}
&-(q-q^{-1})E_1(z)\mapsto :e^{u_1^+(z)}:-:e^{u_1^-(z)}: \,,
\quad
(q-q^{-1})F_1(z)\mapsto :e^{v_1^+(z)}:-:e^{v_1^-(z)}: \,,
\label{sl2-EF}\\
&K_1\mapsto q^{h_{1,0}}\,,
\quad H_{1,n}\mapsto h_{1,n}\,.
\label{sl2-H}
\end{align}
\end{prop}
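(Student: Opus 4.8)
The plan is to verify directly that the proposed currents satisfy the defining relations of $U_q\widehat{\mathfrak{sl}}_2$ in its Drinfeld current presentation, which is precisely what the ``Wakimoto representation'' means here. The key algebraic inputs are the commutation relations \eqref{aii}--\eqref{a12} among the $a_{i,n}$, from which one computes the two-point contractions of the composite fields $u_1^\pm(z)$ and $v_1^\pm(z)$. First I would assemble, from the explicit oscillator formulas \eqref{u1pn}--\eqref{v1pn} and \eqref{h1n}, the pairwise contraction functions: $\langle u_1^\varepsilon(z)u_1^{\varepsilon'}(w)\rangle$, $\langle v_1^\varepsilon(z)v_1^{\varepsilon'}(w)\rangle$, $\langle u_1^\varepsilon(z)v_1^{\varepsilon'}(w)\rangle$, and the mixed ones involving $h_{1,n}$. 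These are what determine the normal-ordering exchange relations; the zero-mode pieces \eqref{u1n}--\eqref{v1n} together with \eqref{Qu1s}--\eqref{baru} supply the correct $q$-power prefactors (the $\mp a_{3,0}\log q$ and $\pm a_{2,0}\log q$ terms are designed exactly so that these prefactors come out right, and in particular so that the $s$-dependence cancels in all $U_q\widehat{\mathfrak{sl}}_2$ relations).

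Then the verification proceeds relation by relation. The Cartan–current relations $[H_{1,m},H_{1,n}] = \delta_{m+n,0}\,\frac{[2m][k_1 m]}{m}$ and $K_1 H_{1,n} = H_{1,n} K_1$ follow immediately from \eqref{h1n} and \eqref{aii}--\eqref{a12} by a direct computation of the quadratic form in the $a_{i,n}$; this also pins down the level as $q^{k_1}$. Next the relations $[H_{1,n},E_1(z)] = \frac{[2n]}{n}q^{-|n|k_1}z^n E_1(z)$ and the analogous one for $F_1(z)$ reduce to computing $\langle h_{1,n}\,u_1^\pm(w)\rangle$ and $\langle h_{1,n}\,v_1^\pm(w)\rangle$ and checking both exponents $\pm$ give the same eigenvalue. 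The $K_1$-conjugation relations $K_1 E_1(z) K_1^{-1} = q^2 E_1(z)$, $K_1 F_1(z) K_1^{-1} = q^{-2} F_1(z)$ follow from $[h_{1,0}, Q_{u_1}]$ computed via \eqref{H10}, \eqref{barQ}, \eqref{baru}, \eqref{a12}. The $E$–$E$ and $F$–$F$ exchange relations $(z - q^{\pm 2}w)E_1(z)E_1(w) = (q^{\pm 2}z - w)E_1(w)E_1(z)$ amount to checking that the contraction $\langle u_1^\varepsilon(z) u_1^{\varepsilon'}(w)\rangle$ produces a factor $(z - q^{\pm2}w)$ (up to the right sign and for the right choice of $\varepsilon,\varepsilon'$ correlating with the sign), and likewise for $v_1$; the two-term structure in \eqref{sl2-EF} is what makes these hold as identities of formal series rather than just rational functions.

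The genuinely delicate relation — and what I expect to be the main obstacle — is the $E$–$F$ relation
\begin{align*}
[E_1(z),F_1(w)] = \frac{1}{q-q^{-1}}\bigl(\delta(z/w)\psi_1^+(w) - \delta(w/z)\psi_1^-(z)\bigr),
\end{align*}
where $\psi_1^\pm$ are built from $K_1^{\pm1}$ and the $H_{1,n}$. Here the point is that each of the four products $:\!e^{u_1^{\varepsilon}(z)}\!:\,:\!e^{v_1^{\varepsilon'}(w)}\!:$ has a simple pole, and the residues at $z=w$ (from terms of opposite sign) must cancel in the appropriate combinations, while the surviving delta-function terms must reassemble into the exponential of the correct linear combination of $h_{1,n}$'s — this is exactly the content one reads off by comparing $u_1^+ + v_1^-$, $u_1^- + v_1^+$ etc.\ with \eqref{h1n} and the zero-mode identity $u_1^\pm + v_1^\pm$ from \eqref{u1n}--\eqref{v1n}. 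I would handle this by computing the OPE $:\!e^{u_1^\varepsilon(z)}\!:\,:\!e^{v_1^{\varepsilon'}(w)}\!:$ explicitly, extracting the $\frac{1}{z-w}$-type singular part, taking the commutator (i.e.\ the difference of the two orderings, which converts the rational kernel into a $\delta$-function), and then verifying that the normal-ordered coefficient of the delta function equals $q^{\pm h_{1,0}}$ dressed by $\exp\bigl(\pm(q-q^{-1})\sum_{n>0} h_{1,\pm n} w^{\mp n}\bigr)$. Finally, since $\slt$ has rank one, the Serre relations are vacuous, so once the above are checked the proposition follows; the remaining bookkeeping (convergence of the relevant geometric series in the domains $|z|>|w|$ vs.\ $|z|<|w|$, and compatibility with the sector decomposition of $\cH$) is routine and can be relegated to Appendix \ref{sec:Wakimoto}.
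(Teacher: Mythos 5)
Your proposed direct verification of the Drinfeld relations is a legitimate route and, if carried out, would prove the proposition; but it is genuinely different from what the paper does. The paper does not re-verify any relations: it starts from Shiraishi's known free-field realization of $U_q\widehat{\mathfrak{sl}}_2$ at level $q^k$ (Theorem \ref{thm:Sh}, quoted from \cite{Sh}) in terms of three bosons $a(z),b(z),c(z)$, and then produces an explicit invertible linear change of oscillators \eqref{a-S}--\eqref{c-S} expressing $a_n,b_n,c_n$ through $h_{1,n},a_{2,n},a_{3,n}$; the change of variables is pinned down by matching the fermionic screening currents $\sigma_2,\sigma_3$ and the bosonic screening $\rho_1$ with their Wakimoto counterparts and by requiring $[H_{1,n},a_{2,m}]=[H_{1,n},a_{3,m}]=0$. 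Substituting this into Shiraishi's formulas yields \eqref{u1n}--\eqref{v1pn} directly, so Proposition \ref{prop:sl2} is a rewriting of a known representation rather than a new computation. What the paper's route buys is brevity and an explanation of where the formulas \eqref{u1pn}--\eqref{v1pn} come from (including why the zero modes carry the free parameter $s$); what your route buys is self-containedness, at the cost of the full OPE analysis --- in particular the $E$--$F$ relation, which you correctly single out as the hard step and which the paper never has to touch. One small inaccuracy in your sketch: with the paper's conventions the coefficient in $[H_{1,r},E_1(z)]$ is $a_{1,1}(r)\,C^{-(r+|r|)/2}=\frac{[2r]}{r}\,q^{-k_1(r+|r|)/2}$, which is trivial for $r<0$, not $q^{-|r|k_1}$ for all $r$; this does not affect the viability of your plan, but it would matter when you actually match eigenvalues.
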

On the Fock space $\cH$ we have an extra boson consisting of $\{a_{0,n}\}$. 
The combination 
\begin{align*}
Z_n=(q^n+q^{-n})a_{0,n}+\sum_{i=1}^3(q^{(k_i+1)n}+q^{-(k_i+1)n})a_{i,n}
\end{align*}
is a Heisenberg algebra commuting with $\{a_{i,n}\}$, $i=1,2,3$, and 
hence with the action of $U_q\widehat{\mathfrak{sl}}_2$. 
Altogether we have a representation of $U_q\widehat{\mathfrak{gl}}_2$ on $\cH$. 
This representation can be further lifted to one of
 $\E_2(q_1,q_2,q_3)$ via the following evaluation morphism: 
\begin{prop}\label{prop:eval}\cite{Mi} 
Let $\bar\E_2(q_1,q_2,q_3)$ denote the quotient of $\E_2(q_1,q_2,q_3)$ 
by the relation $C=q_3$, where $C$ denotes the central element.
Then there exists a homomorphism of algebras
$\bar\E_2(q_1,q_2,q_3)\to  U_q\widehat{\mathfrak{gl}}_2$.
\end{prop}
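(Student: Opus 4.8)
The plan is to recall the evaluation homomorphism of Miki \cite{Mi} explicitly on Drinfeld currents and to indicate why the defining relations are preserved; the only point requiring real attention for us is to fix conventions and to locate where the specialization $C=q_3$ is forced. Recall that $\E_2(q_1,q_2,q_3)$ is generated by currents $E_i(z),F_i(z),K^\pm_i(z)$ indexed by $i\in\Z/2\Z$ together with the central element $C$, subject to the Heisenberg relations among the $K^\pm_i(z)$, the $K$--$E$ and $K$--$F$ relations governed by structure functions built from $q_1,q_2,q_3$, the quadratic $EE$, $FF$, $EF$ relations, and the cubic Serre relations (Appendix \ref{sec:toridal}); the algebra $U_q\widehat{\gl}_2$ is generated by the affine $\slt$ currents $E_1(z),F_1(z),K^\pm_1(z)$ with a central element $q^c$, together with the determinant ($\gl_1$) Heisenberg currents, as used in Section \ref{sec:toroidal gl2}.

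First I would fix the identification of $q_1,q_2,q_3$ with powers of $q$, and define the map by sending the node-$1$ currents of $\E_2$ to the affine $\slt$ currents of $U_q\widehat{\gl}_2$ up to a fixed rescaling, and the node-$0$ currents to \emph{dressed} versions of the opposite currents: $E_0(z)$ to a constant times $F_1(z)$ multiplied by an exponential in the determinant Heisenberg modes, $F_0(z)$ to a constant times $E_1(z)$ dressed symmetrically, and $K^\pm_0(z)$ to $(K^\pm_1(z))^{-1}$ times the matching determinant factor, arranged so that the product $K^\pm_0(z)K^\pm_1(z)$ collapses to a central quantity. This mirrors the classical evaluation map, under which the affine node of the toroidal Chevalley presentation corresponds to the lowest root vector times the loop variable; the extra $\gl_1$-dressing and the shift by $C$ are the quantum corrections, and it is precisely the requirement that $K^\pm_0(z)K^\pm_1(z)$ be central that forces $C$ to act by the scalar $q_3$, so that the homomorphism exists only after passing to the quotient $\bar\E_2(q_1,q_2,q_3)$.

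Then I would verify the relations in four groups: (a) the Heisenberg relations among the $K^\pm_i(z)$, a direct computation with the exponential formulas; (b) the $K$--$E$ and $K$--$F$ relations for each pair $(i,j)\in\{(1,1),(0,0),(1,0),(0,1)\}$, checking that the toroidal structure functions emerge from the $U_q\widehat{\gl}_2$ ones after the parameter identification; (c) the quadratic $EE$, $FF$, $EF$ relations, where the $(1,1)$ case is literally a relation of $U_q\widehat{\gl}_2$ and the $(0,0)$ case reduces to it by the Chevalley-type symmetry $E\leftrightarrow F$ combined with a loop rotation; and (d) the Serre relations, treated likewise. The main obstacle will be the mixed-node cases in (c) and (d): since $E_0(z)$ is a product of $F_1(z)$ with a $\gl_1$-Heisenberg exponential, the operator product $E_0(z)E_1(w)$ picks up extra contraction factors from the dressing, and one must check that these combine with the $F_1E_1$ relation of $U_q\widehat{\gl}_2$ to reproduce exactly the toroidal structure function $g_{01}(z/w)$ and the rational prefactors in the corresponding Serre relation. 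A convenient shortcut is to invoke the automorphism of $\E_2$ interchanging its horizontal and vertical $U_q\widehat{\gl}_2$ subalgebras: under it the relations to be checked become relations among genuine currents that hold by construction, so that only the normalization constants and the placement of $C$ require separate bookkeeping. Once all relations are confirmed and $C$ is seen to act by $q_3$, the assignment descends to the desired homomorphism $\bar\E_2(q_1,q_2,q_3)\to U_q\widehat{\gl}_2$.
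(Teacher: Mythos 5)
The paper does not prove this proposition: it is quoted from Miki \cite{Mi}, and the explicit formulas actually used downstream are taken from \cite{FJM1}. So the only meaningful comparison is with the construction in those references, and your outline does follow it in its essentials: the node-$1$ currents go to the affine $\mathfrak{sl}_2$ currents, the node-$0$ currents go to the opposite currents dressed by vertex operators in the $\gl_1$ Heisenberg modes (the $Z_n$ of Section \ref{sec:toroidal gl2}), the target is really a completion of $U_q\widehat{\gl}_2$ because of these dressings, and the relations are then checked case by case. As a plan this is sound and is the same route as the cited proof.

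Two specific claims in your sketch would not survive being written out. First, $K_0^\pm(z)K_1^\pm(z)$ does not ``collapse to a central quantity'': since $a_{0,1}(r)+a_{1,1}(r)=\frac{[r]}{r}\,(q^r+q^{-r}-d^r-d^{-r})\neq 0$, the modes $H_{0,r}+H_{1,r}$ are not central in $\E_2$, and their image is the $\gl_1$ Heisenberg algebra, which commutes with the $\mathfrak{sl}_2$ currents but not with itself. The specialization $C=q_3$ is instead forced by the relations in which $C$ appears explicitly, namely the $H$--$E$, $H$--$F$ relations $[H_{i,r},E_j(z)]=a_{i,j}(r)C^{-(r+|r|)/2}z^rE_j(z)$ and the node-$0$ $E$--$F$ relation, whose delta-function arguments $\delta(Cw/z)$, $\delta(Cz/w)$ must match those produced by $[F_1,E_1]$ in the target; this is also why Theorem \ref{thm:toroidal-rep} takes $q_3=q^{k_1}$ for the level-$q^{k_1}$ Wakimoto module. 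Second, the proposed shortcut via the automorphism exchanging the horizontal and vertical subalgebras is not available in a non-circular way: that automorphism is an automorphism of $\E_2$ itself, not a map to $U_q\widehat{\gl}_2$, and its construction in \cite{Mi} is intertwined with the very evaluation map you are trying to build. The mixed-node quadratic and Serre relations therefore have to be checked by direct operator-product computation with the dressing factors; that computation is the technical core of \cite{Mi} and \cite{FJM1} and is precisely what your outline leaves unexamined.
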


We use the formulas in \cite{FJM1} for the evaluation homomorphism 
to arrive at the following conclusion. 
\begin{thm}\label{thm:toroidal-rep}
Let $\E_{2,1}$ denote 
the quantum toroidal algebra $\E_2(q_1,q_2,q_3)$ with parameters
\[
 q_1=q^{-k_1-2}\,,\quad q_2=q^2\,,\quad q_3=q^{k_1}\,.
\]
Then the following assignment along with \eqref{sl2-EF}, \eqref{sl2-H}
gives a representation $\pi_1:\E_{2,1}\to \End\cH$.  
\begin{align}
&-(q-q^{-1})E_0(z)\mapsto :e^{\uh_1^+(z)}:-:e^{\uh_1^-(z)}: \,,
\quad
(q-q^{-1})F_0(z)\mapsto :e^{\vh_1^+(z)}:-:e^{\vh_1^-(z)}: \,,
\label{sl2-EF0}
\\
&
K_0\mapsto q^{-h_{1,0}}\,,
\quad H_{0,n}\mapsto \hh_{1,n}\,,
\label{sl2-H0}
\end{align}
where
\begin{align}
&\uh^\pm_1(z)=
-\bigl(Q_{u_1}+u_{1,0}\log z\bigr)
\mp a_{2,0}\log q-\sum_{n\neq0}\frac{\uh^\pm_{1,n}}{n}z^{-n}\,,
\label{uh1}\\
&\vh^\pm_1(z)=Q_{u_1}+u_{1,0}\log z
\pm a_{3,0}\log q
-\sum_{n\neq0}\frac{\vh^\pm_{1,n}}{n}z^{-n}\,,
\label{vh1}\\
&\hh_{1,n}=\frac{1}{n}
\Bigl([k_1n]a_{0,n}-[(k_3+2)n]a_{2,n}-[(k_2+2)n]a_{3,n}\Bigr)
\times
\begin{cases}
-\frac{[n]}{[(k_2+2)n]} & (n>0),\\
\frac{[n]}{[(k_3+2)n]} & (n<0),\\
\end{cases}
\label{hh1}
\end{align}
and 
\begin{align}
&\uh^+_{1,n}=
\begin{cases}
\frac{[n]}{[(k_3+2)n]} \Bigl(q^{k_1n}a_{0,n}+q^{-(k_2+2)n}a_{3,n}\Bigr)
& (n<0)\\
-\frac{[n]}{[(k_2+2)n]} \Bigl(a_{0,n}+q^{-(k_2+2)n}a_{2,n}\Bigr) & (n>0)\\
\end{cases}
\label{uh1pn}\\
&\uh^-_{1,n}=\uh^+_{1,n}-(q^n-q^{-n})a_{2,n}\,,
\label{uh1nn}\\
&\vh^-_{1,n}=
\begin{cases}
-\frac{[n]}{[(k_3+2)n]}\Bigl(a_{0,n}+q^{-(k_3+2)n}a_{3,n}\Bigr)
& (n<0)\\
\frac{[n]}{[(k_2+2)n]} \Bigl(q^{k_1n}a_{0,n}+q^{-(k_3+2)n}a_{2,n}\Bigr) 
& (n>0)\\
\end{cases}
\label{vh1nn}\\
&\vh^+_{1,n}=\vh^-_{1,n}-(q^n-q^{-n})a_{3,n}\,.
\label{vh1pn}
\end{align}
\end{thm}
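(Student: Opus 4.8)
The plan is to build the representation $\pi_1$ in two layers, matching the two-step structure of Propositions \ref{prop:sl2} and \ref{prop:eval}. By Proposition \ref{prop:sl2} we already have the Wakimoto action of $U_q\widehat{\mathfrak{sl}}_2$ of level $q^{k_1}$ on $\cH$ via \eqref{sl2-EF}, \eqref{sl2-H}; by Proposition \ref{prop:sl2} together with the remark that the bosons $\{Z_n\}$ commute with everything in the $\mathfrak{sl}_2$ image, this extends to an action of $U_q\widehat{\mathfrak{gl}}_2$. The heart of the argument is to pull this back along the evaluation homomorphism $\bar\E_2(q_1,q_2,q_3)\to U_q\widehat{\mathfrak{gl}}_2$ of Proposition \ref{prop:eval} and then to identify the resulting formulas for the ``other half'' of the Drinfeld generators --- namely $E_0(z)$, $F_0(z)$, $K_0$, $H_{0,n}$ --- with the explicit expressions \eqref{sl2-EF0}--\eqref{vh1pn}. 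Concretely: first I would recall from \cite{FJM1} the closed form of the evaluation map, which expresses $E_0(z), F_0(z), K_0, H_{0,n}$ of $\E_2$ as specific combinations of the $U_q\widehat{\mathfrak{gl}}_2$ currents (the $\mathfrak{sl}_2$ currents $E_1,F_1,\dots$ together with the extra Heisenberg $\{Z_n\}$) with the spectral parameter and level specialized according to $C=q_3=q^{k_1}$, $q_1=q^{-k_1-2}$, $q_2=q^2$. Then I would substitute the Wakimoto bosonizations \eqref{sl2-EF}, \eqref{sl2-H} and the definition of $Z_n$, and simplify.

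The key computational steps, in order, are: (1) verify that the parameter constraint $q_1q_2q_3=1$ forced by $\E_2(q_1,q_2,q_3)$ is satisfied by $(q^{-k_1-2}, q^2, q^{k_1})$ --- immediate --- and that the level/central specialization in Proposition \ref{prop:eval} is the correct one for level $q^{k_1}$; (2) compute the oscillator parts: show that the combination of $u^\pm_{1,n}, v^\pm_{1,n}, h_{1,n}$ and $Z_n$ dictated by the evaluation formula collapses, after using the commutation relations \eqref{aii}--\eqref{a12}, to the stated $\uh^\pm_{1,n}, \vh^\pm_{1,n}, \hh_{1,n}$; here the main point is that the $a_{0,n}$-dependence, which is absent from the $\mathfrak{sl}_2$ currents, enters precisely through $Z_n$ and reassembles into the formulas \eqref{uh1pn}, \eqref{vh1nn}, \eqref{hh1}; (3) compute the zero-mode parts, tracking the shifts by $a_{2,0}\log q$ and $a_{3,0}\log q$ in \eqref{uh1}, \eqref{vh1} and checking they are consistent with the $s$-dependent zero modes $Q_{u_1}, u_{1,0}$ of \eqref{Qu1s}--\eqref{baru} (the $s$-ambiguity should drop out of $E_0,F_0$ just as it does, by Proposition \ref{prop:sl2}, for $E_1, F_1$); (4) confirm the normal-ordering constants and the overall factors $-(q-q^{-1})$ and $(q-q^{-1})$ in \eqref{sl2-EF0} by a short contraction check of $e^{\uh_1^+(z)}$ against $e^{\uh_1^-(z)}$ and against the $\mathfrak{sl}_2$ currents, ensuring the Serre-type and $EF$ relations of $\E_2$ hold.

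The main obstacle I anticipate is step (2) combined with (4): the evaluation homomorphism of \cite{FJM1} expresses the toroidal $E_0(z), F_0(z)$ currents not as single exponentials but through a normal-ordered product / integral involving the $\mathfrak{sl}_2$ currents and the Heisenberg field, so the real work is to show that this a priori complicated expression bosonizes into a clean difference of two pure exponentials $:e^{\uh_1^\pm(z)}:$ with exactly the exponents \eqref{uh1}, \eqref{vh1}. This requires careful bookkeeping of all the two-point functions among $a_{0,n}$ and $a_{i,n}$ ($i=1,2,3$) and of the $q$-shifts $q^{\mp(k_1+2)}$ inherited from the spectral-parameter dependence of the evaluation map; the identities of the type \eqref{r1r1-1}--\eqref{t1t1-2} used in Proposition \ref{prop:q-screening} will reappear here. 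Once the exponents are pinned down, verifying that \eqref{sl2-EF0}--\eqref{vh1pn} satisfy all defining relations of $\E_{2,1}$ reduces, by the evaluation homomorphism, to relations already known to hold in $U_q\widehat{\mathfrak{gl}}_2$, so no independent check of the toroidal relations is needed --- this is why routing through Propositions \ref{prop:sl2} and \ref{prop:eval} rather than a direct verification is the efficient strategy. The detailed contraction tables and the precise form of the evaluation map are relegated to Appendices \ref{sec:Wakimoto} and \ref{sec:toridal}.
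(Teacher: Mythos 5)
Your proposal follows the same route as the paper: the paper's entire argument for Theorem \ref{thm:toroidal-rep} is to take the Wakimoto $U_q\widehat{\mathfrak{sl}}_2$ action of Proposition \ref{prop:sl2}, adjoin the commuting Heisenberg $\{Z_n\}$ to get $U_q\widehat{\mathfrak{gl}}_2$, and compose with the explicit evaluation homomorphism of \cite{FJM1} under the specialization $C=q_3=q^{k_1}$, so that the toroidal relations hold automatically and only the bosonization of $E_0(z),F_0(z),K_0,H_{0,n}$ into the stated exponentials needs to be computed. Your identification of the main computational burden (collapsing the evaluation-map expressions into the pure exponentials $:e^{\uh_1^\pm(z)}:$, $:e^{\vh_1^\pm(z)}:$ with the $a_{0,n}$-dependence entering via $Z_n$) is exactly where the paper's unwritten work lies.
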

\begin{lem}
The bosonic screening $\rho_1$ and the 
fermionic screenings $\sigma_2,\sigma_3$
(anti-)commute with the action of $\E_{2,1}$.  
\end{lem}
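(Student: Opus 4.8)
The plan is to reduce the statement to a check on the generating currents of $\E_{2,1}$ and then to perform operator product computations of exactly the type carried out in the proof of Proposition~\ref{prop:q-screening}. Since $\pi_1\colon\E_{2,1}\to\End\cH$ is an algebra homomorphism, the set of $x\in\E_{2,1}$ with $[\rho_1,\pi_1(x)]=0$ — and, for $j=2,3$, the set of $x$ for which the graded commutator with $\pi_1(x)$ vanishes, which is just the ordinary commutator here since all toroidal generators act by even operators — is a subalgebra of $\E_{2,1}$. Hence it suffices to verify the (anti-)commutation on the Fourier modes of the currents $E_1(z),F_1(z),E_0(z),F_0(z)$ and on $H_{1,n},H_{0,n}$ ($n\neq0$), $K_1^{\pm1},K_0^{\pm1}$, all of which are given explicitly in Proposition~\ref{prop:sl2} and Theorem~\ref{thm:toroidal-rep}. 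As in Proposition~\ref{prop:q-screening}, these identities are to be read on the sectors $\cH_\beta$ on which the screenings in question are defined.

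The Heisenberg and Cartan generators are immediate: $H_{1,n}\mapsto h_{1,n}$ and $H_{0,n}\mapsto\hh_{1,n}$ are linear combinations of the oscillators $\{a_{i,m}\}_{i=0,\dots,3}$ with coefficients read off from \eqref{h1n} and \eqref{hh1}, and $K_1^{\pm1}\mapsto q^{\pm h_{1,0}}$, $K_0^{\pm1}\mapsto q^{\mp h_{1,0}}$ depend only on zero modes. Therefore $[h_{1,n},\rho_1(z)]$ and $[h_{1,n},\sigma_j(z)]$, and likewise with $\hh_{1,n}$, are scalar multiples of $\rho_1(z)$, $\sigma_j(z)$ — a shift of the linear part of the exponent — and one checks with \eqref{aii}--\eqref{a12} that these scalars vanish; equivalently, $r^\pm_1(z)$ and $a_j(z)$ ($j=2,3$) carry no Heisenberg charge along these directions. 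The same linear computation disposes of $K_1^{\pm1},K_0^{\pm1}$ on each admissible sector.

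The substantive step is the four root currents. For each $X(w)\in\{E_1(w),F_1(w),E_0(w),F_0(w)\}$ and each screening current $\mathcal S(z)\in\{\rho_1(z),\sigma_2(z),\sigma_3(z)\}$ one computes the contraction $\mathcal S(z)X(w)$ from Tables~\ref{tab:art} and~\ref{tab:rho-tau2} of Appendix~\ref{sec:contraction}; since $X(w)$ and $\mathcal S(z)$ are each a difference of at most two pure exponentials, the result is a sum of rational prefactors in $z/w$ multiplying normal-ordered exponentials, and the two orderings $\mathcal S(z)X(w)$ and $X(w)\mathcal S(z)$ differ only through the branch of each prefactor. Integrating in $z$ and shifting contours, the (anti-)commutator collapses to a finite sum of residues at the poles $z=q^{c}w$, and one then invokes the exponent identities \eqref{r1r1-1}--\eqref{t1t1-2} for the contractions involving $r^\pm_1$, together with their counterparts
\begin{align*}
&u_1^+(z)-u_1^-(z)=a_3(q^{-1}z)-a_3(qz)\,,\quad v_1^+(z)-v_1^-(z)=a_2(qz)-a_2(q^{-1}z)\,,\\
&\uh_1^+(z)-\uh_1^-(z)=a_2(q^{-1}z)-a_2(qz)\,,\quad \vh_1^+(z)-\vh_1^-(z)=a_3(qz)-a_3(q^{-1}z)\,,
\end{align*}
which follow at once from \eqref{u1nn}, \eqref{v1pn}, \eqref{uh1nn}, \eqref{vh1pn} and the zero-mode formulas. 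These identities make the surviving normal-ordered exponentials coincide at the relevant poles, so that the residues either cancel in pairs or assemble into the total $w$-derivative of a single normal-ordered field and hence integrate to zero — precisely the mechanism illustrated by the $\sigma_2$-versus-$\rho_1$ computation already spelled out for Proposition~\ref{prop:q-screening}.

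\textbf{The main obstacle} is the bookkeeping for the genuinely toroidal currents $E_0(w),F_0(w)$: built from the fourth boson $a_0$ and carrying the shift $q^{\pm(k_1+2)}$, their contractions with $\rho_1$ and with $\sigma_2,\sigma_3$ produce the largest number of terms, and one must keep careful track of the arguments so that the identities above apply at the correct shifted points. The remaining work — $E_1,F_1$ against all three screenings, and $E_0,F_0$ against $\sigma_3$ (which one may also hope to extract from the $\sigma_2$ case using the symmetry exchanging the indices $2\leftrightarrow3$ and $k_2\leftrightarrow k_3$, which fixes the parameters $q^{-k_1-2},q^2,q^{k_1}$ of $\E_{2,1}$) — is routine manipulation with the contraction tables.
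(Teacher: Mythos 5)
Your proposal is correct and follows essentially the same route as the paper: the paper's proof likewise reduces to contraction computations in the style of Proposition~\ref{prop:q-screening}, with the cancellation of residues governed by exactly the four exponent identities you state for $u_1^\pm,v_1^\pm,\uh_1^\pm,\vh_1^\pm$. One small correction: the contractions of the root currents with $\rho_1^\pm,\sigma_2,\sigma_3$ are tabulated in Table~\ref{tab:UVsigma}, not in Tables~\ref{tab:art} and~\ref{tab:rho-tau2} (which concern contractions among the screening currents themselves).
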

\begin{proof}
This can be verified in the same way as in Proposition \ref{prop:q-screening}. 
We use Table \ref{tab:UVsigma} and the identities 
\begin{align*}
&u_1^+(z)-u_1^-(z)=-a_3(q z)+a_3(q^{-1} z)\,,
\\
&v_1^+(z)-v_1^-(z)=-a_2(q^{-1} z)+a_2(q z)\,,
\\
&\hat u_1^+(z)-\hat u_1^-(z)=-a_2(q z)+a_2(q^{-1} z)\,,
\\
&\hat v_1^+(z)-\hat v_1^-(z)=-a_3(q^{-1} z)+a_3(q z)\,.
\end{align*}
\end{proof}
\medskip

Cyclically permuting the indices $(1,2,3)$, 
we obtain three different
actions of quantum toroidal $\gl_2$ algebras on $\cH$.
\begin{cor}
Let $\E_{2,i}=\E_2(q^{-k_i-2},q^2,q^{k_i})$, $i=1,2,3$.
Then there exist representations $\pi_i:\E_{2,i}\to \End\cH$. 
\end{cor}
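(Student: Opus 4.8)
The plan is to promote Theorem~\ref{thm:toroidal-rep} to the statement of the corollary by exploiting the manifest cyclic symmetry of the whole construction. Looking back at Section~\ref{deformed KS}, the commutation relations \eqref{aii}--\eqref{a12} among the oscillators $a_{i,n}$ and the zero modes are invariant under the cyclic permutation $(1,2,3)$ of the indices: \eqref{aii} is symmetric in $i$, \eqref{a01} is manifestly cyclic in $(1,2,3)$ (it involves $a_0$ together with one of $a_1,a_2,a_3$), and \eqref{a12} is stated together with its cyclic images. The constraint $k_1+k_2+k_3=-4$ on the levels is also cyclically symmetric, as is the relation $a_{0,0}+a_{1,0}+a_{2,0}+a_{3,0}=0$ on the zero modes. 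Hence the assignment $a_{i,n}\mapsto a_{\gamma(i),n}$, $Q_{a_i}\mapsto Q_{a_{\gamma(i)}}$ (where $\gamma$ is the $3$-cycle $(1\,2\,3)$ fixing $0$, and simultaneously $k_i\mapsto k_{\gamma(i)}$) extends to an automorphism $g$ of (a completion of) the oscillator algebra acting on $\cH$.

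First I would make this precise: the Fock space $\cH$ carries a linear isomorphism $G_\gamma:\cH\to\cH$ intertwining the oscillator action with its image under $g$, permuting the sectors $\cH_\beta$ accordingly. Next I would observe that every ingredient entering the definition of $\pi_1$ in Theorem~\ref{thm:toroidal-rep} --- the fields $u_1^\pm(z)$, $v_1^\pm(z)$ of \eqref{u1n}--\eqref{v1pn}, the zero-mode data \eqref{Qu1s}--\eqref{baru}, the currents $h_{1,n}$ of \eqref{h1n}, and the hatted fields \eqref{uh1}--\eqref{vh1pn} --- is written purely in terms of $a_{i,n}$, $Q_{a_i}$, $k_i$, and $q$, in a way whose index pattern is obtained from the ``$i=1$'' case by the cyclic rule (this is exactly the ``$\&\ cycl.$'' convention used throughout Sections~\ref{FS}--\ref{deformed KS}). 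Therefore, defining $u_i^\pm(z)$, $v_i^\pm(z)$, $h_{i,n}$, $\uh_i^\pm(z)$, $\vh_i^\pm(z)$, $\hh_{i,n}$ by applying $\gamma$ to the formulas for $i=1$, one gets $G_\gamma\circ(\text{$i=1$ field})\circ G_\gamma^{-1}=(\text{$i=2$ field})$, and likewise for the third cyclic image. Consequently $\pi_i:=G_\gamma^{\,i-1}\circ\pi_1\circ G_\gamma^{-(i-1)}$, read through these permuted formulas, is a representation of $\E_2(q^{-k_i-2},q^2,q^{k_i})=\E_{2,i}$ on $\cH$: the defining relations of $\E_{2,i}$ hold because they hold for $\pi_1$ after the parameter relabelling $q_1=q^{-k_1-2}\mapsto q^{-k_i-2}$, $q_3=q^{k_1}\mapsto q^{k_i}$, $q_2=q^2$ fixed, which is exactly $\gamma$ applied to the level data.

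The only genuine point to check --- and the one I expect to be the main obstacle --- is that the auxiliary parameter $s$ in \eqref{Qu1s} and the resulting zero-mode bookkeeping behave well under $\gamma$. The field $u_1^\pm(z)$ and the current $h_{1,0}$ of \eqref{H10} are defined only after a choice of $s$ (the paper's \eqref{choose-s}); since this choice is itself presumably made cyclically (or is $s$-independent on the relevant sectors), one must confirm that the cyclically rotated Wakimoto data is literally the one produced by the same recipe with $1$ replaced by $i$, rather than merely an isomorphic one. I would handle this by checking that the constraints defining the admissible $s$ (well-definedness of $\rho_i,\tau_i$ on $\cH_\beta$, i.e. the divisibility conditions $[a_{j,0}+a_{k,0},\beta]\in(k_i+2)\Z$ stated after the screening definitions) transform correctly under $\gamma$, so that $G_\gamma$ carries the $\pi_1$-admissible sectors to the $\pi_2$-admissible ones. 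Granting this, the corollary follows immediately, with no further computation: it is Theorem~\ref{thm:toroidal-rep} transported three times around the cycle, and I would state it as such rather than re-verifying any toroidal relation by hand.
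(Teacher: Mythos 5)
Your proposal is essentially the paper's own argument: the corollary appears there with the one-line justification ``Cyclically permuting the indices $(1,2,3)$, we obtain three different actions of quantum toroidal $\gl_2$ algebras on $\cH$,'' i.e.\ the defining data \eqref{aii}--\eqref{a12} and every formula entering Theorem \ref{thm:toroidal-rep} are form-invariant under the simultaneous cyclic relabelling of the $a_{i,n}$, $Q_{a_i}$ and the $k_i$, so the verification done for $i=1$ carries over verbatim to $i=2,3$. One small caution: since $k_1,k_2,k_3$ are fixed (generically distinct) numbers, the map $a_{i,n}\mapsto a_{\gamma(i),n}$ is not an automorphism of the oscillator algebra acting on $\cH$ (it changes the contractions \eqref{a01}--\eqref{a12} unless the $k_i$ are also permuted), so the argument is more safely phrased as a relabelling of the formulas and of the verification --- which is what you in fact do when you permute the expressions for $u_1^\pm$, $v_1^\pm$, $h_{1,n}$ --- rather than as conjugation by an intertwiner $G_\gamma$ on a fixed Fock space.
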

We are not aware of any reasonable relations between algebras $\E_{2,i}$
with different $i$.
These actions do not mutually commute, in fact 
even the quantum affine subalgebras do not. 
The situation is different from the quantum toroidal 
$(\mathfrak{gl}_m,\mathfrak{gl}_n)$ duality \cite{FJM2}
 where the quantum toroidal 
algebras do not commute but 
the quantum affine subalgebras mutually commute. 

\medskip

\section{Integrals of Motion}\label{sec:IM}
In \cite{FJM} it was shown that 
the standard transfer matrix construction provides 
a commutative subalgebra inside a completion of $\E_2$. 
By ``transfer matrix'' we mean a weighted trace of the universal 
R matrix of $\E_2$. In the course of taking trace, one is naturally led to
consider dressed currents
\begin{align*}
F_i^{dress}(z,p)&=
F_i(z)\prod_{\ell\ge 0}\bar{K}_i^+\bigl(p^\ell z\bigr)^{-1}
\\
&=F_i(z)\exp\Bigl(-(q-q^{-1})\sum_{n>0}\frac{1}{1-p^n}H_{i,n}z^{-n}\Bigr)\,.
\end{align*}
The Taylor coefficients of the transfer matrix are then
multiple integrals involving products of the dressed currents, giving 
the deformed analog of the ``non-local integrals of motion''. 
The aim of this section is to 
discuss integrals of motion 

We begin by observing that 
the bosonic screening currents $\rho_i(z),\tau_i(z)$ 
are the images of these dressed currents on the Fock space $\cH$, 
modulo a simple modification in the zero mode.  
In the following we set 
\begin{align*}
p_i=q^{2(k_i+2)}\,,\quad i=1,2,3\,,
\end{align*}
and assume that $|p_i|<1$ for $i=1,2,3$. 
This implies in particular that $|q^4|=|p_1p_2p_3|<1$.

\begin{prop}\label{prop:rhoFF}
We have the relations
\begin{align*}
&z\rho_3(z) =e^{ s \bar{Q}_{u_1}}
\pi_1\Bigl(F_1^{dress}(z,p_3)\Bigr)z^{-\lambda h_{1,0}}
\,,
\\
&z\tau_3(z)= e^{-s \bar{Q}_{u_1}}
\pi_1\Bigl(F_0^{dress}(z,p_3)\Bigr)z^{\lambda h_{1,0}}
\,,
\end{align*}
where $\bar{Q}_{u_1}$ is given in \eqref{barQ} and
\begin{align*}
\lambda=\frac{1-(k_2+2)s}{k_3+2}\,.
\end{align*}
\end{prop}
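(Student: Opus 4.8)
The plan is to verify the two stated identities by direct comparison of the free-field exponents on both sides, reducing everything to the bosonization formulas already established. First I would unwind the definition of the dressed current. By the second displayed formula for $F_i^{dress}$, we have
\begin{align*}
\pi_1\bigl(F_1^{dress}(z,p_3)\bigr)
=\pi_1\bigl(F_1(z)\bigr)\exp\Bigl(-(q-q^{-1})\sum_{n>0}\frac{1}{1-p_3^n}h_{1,n}z^{-n}\Bigr)\,,
\end{align*}
where $\pi_1(F_1(z))$ is given by \eqref{sl2-EF} and $h_{1,n}$ by \eqref{h1n}. Since $F_1(z)$ is a difference of two vertex operators $:e^{v_1^\pm(z)}:$, the dressing by a purely positive-mode exponential just shifts the negative-mode part of $v_1^\pm(z)$: concretely, normal-ordering $\exp(-(q-q^{-1})\sum_{n>0}\frac1{1-p_3^n}h_{1,n}z^{-n})$ with $:e^{v_1^\pm(z)}:$ produces $:e^{\hat v_1^\pm(z)}:$ for a suitably modified field whose $n<0$ part is $v^\pm_{1,n}$ and whose $n>0$ part gets the extra term. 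The key point is that with $p_3=q^{2(k_3+2)}$ the combination $\frac{[n]}{[(k_3+2)n]}\cdot\frac1{1-p_3^n}$ telescopes into exactly the coefficients appearing in $r^\pm_3(z)$. So the bulk of the work is the algebraic identity, for $n>0$,
\begin{align*}
v^\pm_{1,n}-(q-q^{-1})\frac{1}{1-p_3^n}h_{1,n}
=\text{(the $n>0$ Fourier mode of $-r^\mp_3(z)$, up to the }q^{\mp(\cdot)}\text{ shift)}\,,
\end{align*}
and a matching (simpler) identity for $n<0$ where no dressing occurs; both follow by substituting \eqref{v1nn}, \eqref{v1pn}, \eqref{h1n} and the definition $r^\pm_3(z)=-a_1(k_3+2;z)-a_2(k_3+2;q^{\mp(k_3+2)}z)$, then using $[n]/[(k_3+2)n]$ identities together with $q^{(k_3+2)n}-q^{-(k_3+2)n}=(q^n-q^{-n})[(k_3+2)n]/[n]$.

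Second, I would handle the zero modes separately, which is the source of the correction factors $e^{\pm s\bar Q_{u_1}}$ and $z^{\mp\lambda h_{1,0}}$. The field $v_1^\pm(z)$ carries the zero-mode piece $-(Q_{u_1}+u_{1,0}\log z)\pm a_{2,0}\log q$, and by \eqref{Qu1s}, $Q_{u_1}=Q^0_{u_1}+s\bar Q_{u_1}$, $u_{1,0}=u^0_{1,0}+\frac{s}{1+k_1s}\bar u_{1,0}$. The target $\rho_3(z)$ has exponents $r^\pm_3(z)$ whose zero modes are $-a_1(k_3+2;z)-a_2(k_3+2;q^{\mp(k_3+2)}z)$ evaluated on the $a_i(N;z)$ zero-mode part $\frac1N(Q_{a_i}+a_{i,0}\log z)$. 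So the claim is that $Q^0_{u_1}$ and $u^0_{1,0}$ reproduce the desired combination of $Q_{a_i}, a_{i,0}$ — this is essentially \eqref{Q0u0} — while the $s\bar Q_{u_1}$ and $\frac{s}{1+k_1s}\bar u_{1,0}$ terms are precisely the extra junk that the prefactor $e^{s\bar Q_{u_1}}$ and the power $z^{-\lambda h_{1,0}}$ are designed to cancel, given $h_{1,0}=-\frac1{1+k_1s}\bar u_{1,0}$ from \eqref{H10} and the stated value $\lambda=\frac{1-(k_2+2)s}{k_3+2}$. I would check this by collecting the coefficient of $\log z$ on both sides: on the left it is (zero-mode of $-r^\pm_3$) plus a shift from $q^{\mp(k_3+2)}z$ contributing to the overall power; on the right it is $-u_{1,0}+\lambda h_{1,0}$; matching these pins down $\lambda$, and one verifies it agrees with the given formula. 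The overall $1/z$ on the left of the first identity versus the $z^{-\lambda h_{1,0}}$ on the right and the $\frac1{(q-q^{-1})z}$ in the definition \eqref{rho-q} must also be bookkept; I expect the $z$-powers to match once the $q^{\mp(k_3+2)}z$ argument shift in $r^\mp_3$ is accounted for, since that shift converts the two vertex operators in $F_1(z)$ into the two in $\rho_3(z)$.

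Third, the $\tau_3$ identity is entirely parallel: $F_0^{dress}(z,p_3)$ is built from $\pi_1(F_0(z))=:e^{\hat v_1^+(z)}:-:e^{\hat v_1^-(z)}:$ via \eqref{sl2-EF0}, \eqref{vh1}, \eqref{vh1nn}, \eqref{vh1pn}, and $\hh_{1,n}$ from \eqref{hh1}; dressing by $p_3$ and comparing with $t^\pm_3(z)=-a_0(k_3+2;z)-a_3(k_3+2;q^{\mp(k_3+2)}z)$ gives the result, with the zero-mode correction now $e^{-s\bar Q_{u_1}}z^{+\lambda h_{1,0}}$ because $\hat v_1^\pm(z)$ carries $+(Q_{u_1}+u_{1,0}\log z)$ rather than $-(Q_{u_1}+u_{1,0}\log z)$. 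I would phrase the proof so that the $\tau_3$ case is deduced from the $\rho_3$ computation by the obvious sign flip in the $u_1$-zero-mode and the replacement $(a_1,a_2,k_1)\to(a_3,a_0,\cdot)$ dictated by comparing \eqref{v1n} with \eqref{vh1}.

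The main obstacle will be the zero-mode bookkeeping in the second step: keeping track of the arbitrary parameter $s$, the shift $\frac{s}{1+k_1s}$, the precise value of $\lambda$, and the interplay between the $\log z$ terms in $a_i(N;z)$, the argument shifts $q^{\mp(k_3+2)}z$, and the explicit powers of $z$ on both sides. The oscillator (nonzero-mode) identity, by contrast, is a mechanical consequence of the telescoping $\frac1{1-p_3^n}$ with $p_3=q^{2(k_3+2)}$ together with the $[n]/[(k_3+2)n]$ bookkeeping, and should go through without surprises. I would also double-check that the normal-ordering constant produced when moving the dressing exponential past $:e^{v_1^\pm(z)}:$ is trivial (which it is, since the dressing has only positive modes and $v_1^\pm$ only a single vertex operator, so there is no self-contraction generating a constant), so that no spurious scalar appears in the final identity.
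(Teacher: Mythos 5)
Your proposal is correct and is essentially the paper's own argument: the paper proves the proposition by verifying the single exponent identity $r_3^\pm(z)-v_1^\pm(z)=s\bar{Q}_{u_1}-\lambda h_{1,0}\log z-(q-q^{-1})\sum_{n>0}\frac{1}{1-p_3^n}h_{1,n}z^{-n}$ (and its hatted analogue for $t_3^\pm-\vh_1^\pm$) directly from \eqref{u1n}--\eqref{h1n}, \eqref{uh1}--\eqref{vh1pn} and \eqref{Qu1s}, which is exactly your mode-by-mode comparison with the telescoping of $1/(1-p_3^n)$ against $[n]/[(k_3+2)n]$ and the separate zero-mode bookkeeping that fixes $\lambda$. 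Just be careful with the sign/superscript in your intermediate claim for the $n>0$ modes: the identity is between $r_3^{\pm}$ and $v_1^{\pm}$ with matching superscripts, not $r_3^{\mp}$.
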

\begin{proof}
This follows from the identities
\begin{align*}
&r_3^\pm(z)-v^\pm_1(z)=s \bar{Q}_{u_1}
-\lambda h_{1,0}\log z
-(q-q^{-1})\sum_{n>0}\frac{1}{1-p_3^n}h_{1,n}z^{-n}\,, 
\\
&t_3^\pm(z)-\vh^\pm_1(z)=
-s\bar{Q}_{u_1}+\lambda h_{1,0}\log z
-(q-q^{-1})\sum_{n>0}\frac{1}{1-p_3^n}\hh_{1,n}z^{-n}\,,
\end{align*}
which can be checked directly using
\eqref{u1n}--\eqref{h1n}, \eqref{uh1}--\eqref{vh1pn} and
\eqref{Qu1s}. 
\end{proof}
\medskip

Proposition \ref{prop:rhoFF} allows us to rewrite 
\begin{align*}
&\prod_{1\le j\le N}^{\curvearrowright} 
\pi_1\bigl(F_0^{dress}(x_{0,j},p_3)\bigr)
\prod_{1\le j\le N}^{\curvearrowright}
\pi_1\bigl(F_1^{dress}(x_{1,j},p_3)\bigr)\\
&=
\prod_{1\le j\le N}^{\curvearrowright} x_{0,j}\tau_{3}(x_{0,j})
\prod_{1\le j\le N}^{\curvearrowright}x_{1,j}\rho_{3}(x_{1,j})
 \times 
\prod_{j=1}^Nx_{0,j}^{\frac{2j}{k_3+2}}x_{1,j}^{-\frac{2(N-j)}{k_3+2}}
\prod_{j=1}^N\Bigl(\frac{x_{1,j}}{x_{0,j}}\Bigr)^{\lambda h_{1,0}}\,,
\end{align*}
where $\prod_{1\le j\le N}^{\curvearrowright} A_j=A_1A_2\cdots A_N$.
Inserting this into the known expressions for the integrals of motion
associated with the quantum toroidal $\mathfrak{gl}_2$ algebra \cite{FJM}
we obtain the following formula
\footnote{We use \cite{FJM}, eq.(3.11),   
choosing $\bar p_1=1$ in the argument
of $\vartheta_\nu$. Also the factor $q^{-H_{1,0}/2}$ there 
should be corrected to $q^{H_{1,0}/2}$.}
\begin{align}
&\mathbb{G}^{(\nu)}_{1,N}= 
\int\!\!\cdots \!\!\int
\prod_{1\le j\le N\atop t=0,1}\frac{d x_{t,j}}{x_{t,j}}
\prod_{1\le j\le N}^{\curvearrowright} x_{0,j}\tau_{3}(x_{0,j})
\prod_{1\le j\le N}^{\curvearrowright}x_{1,j}\rho_{3}(x_{1,j})
\cdot K^{(\nu)}_{1,N}\bigl(\mathbf{x};p_{3}\bigr)\,,
\label{G1}
\end{align}
where $\mathbf{x}=(x_{0,1},\ldots,x_{0,N},x_{1,1},\ldots,x_{1,N})$, 
$\nu=0,1$, 
\begin{align}
&K^{(\nu)}_{1,N}\bigl(\mathbf{x};p\bigr)
=\frac{\prod_{t=0,1}\prod_{j<l}
\Theta_{p}(x_{t,l}/x_{t,j},q^2x_{t,l}/x_{t,j})}
{\prod_{j,l}
\Theta_{p}(q^{k_1+2}x_{1,l}/x_{0,j},q^{-k_1}x_{1,l}/x_{0,j})
}
\vartheta_\nu\Bigl(
q^{h_{1,0}/2}\prod_{j=1}^N\frac{x_{1,j}}{x_{0,j}},p\Bigr)
\label{kernel}\\
&\quad \times 
\prod_{j=1}^N 
x_{0,j}^{1-2j+\frac{2j}{k_{3}+2}}
x_{1,j}^{1-2(N-j)-\frac{2(N-j)}{k_{3}+2}}
\Bigl(\prod_{j=1}^N\frac{x_{1,j}}{x_{0,j}}\Bigr)^{\lambda h_{1,0}}
\,,\nn
\end{align}
and $\Theta_p(z)=\prod_{n=1}^\infty(1-p^{n-1}z)(1-p^nz^{-1})(1-p^n)$,
$\vartheta_\nu(z,p)=\sum_{n\in\Z+\nu/2}p^{n^2}z^{2n}$.
The contours of integration are deformations of the unit circle 
$|x_{0,j}|=|x_{1,j}|=1$ from the domain where the parameters satisfy
$|q^{-k_1-2}|<1,|q^{k_1}|<1$. 
More specifically, for each $i=0,1$ and $j=1,\ldots,N$, the $x_{i,j}$ 
integration contour is such that the series of poles
\begin{align*}
&p_3^mq^{-k_1-2}x_{1-i,l}\,,\ p_3^mq^{k_1}x_{1-i,l}\,
\quad (1\le l\le N, m\ge0) 
\end{align*}
are inside and 
\begin{align*}
&p_3^{-m}q^{k_1+2}x_{1-i,l}\,,\ p_3^{-m}q^{-k_1}x_{1-i,l}\,
\quad (1\le l\le N, m\ge0) 
\end{align*}
are outside.

From now on, we choose 
\begin{align}
 s=\frac{1}{2(k_2+2)}
\label{choose-s}
\end{align}
so that $\lambda=\frac{1}{2(k_3+2)}$. 
The point of the choice \eqref{choose-s} is to make
the kernel function \eqref{kernel} quasi-periodic, 
having a factor of quasi-periodicity independent of $h_{1,0}$.
That is, 

\begin{lem}\label{lem:quasi}
We have
\begin{align*}
&K^{(\nu)}_{1,N}(\mathbf{x};p)\bigl|_{x_{0,j}\to p x_{0,j}}
=q^{2}K^{(\nu)}_{1,N}(\mathbf{x};p)
\,,
\quad
K^{(\nu)}_{1,N}(\mathbf{x};p)\bigl|_{x_{1,j}\to p x_{1,j}}
=q^{2}K^{(\nu)}_{1,N}(\mathbf{x};p)\,.
\end{align*}
\end{lem}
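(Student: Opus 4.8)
The plan is to prove Lemma \ref{lem:quasi} by a direct computation resting on the quasi-periodicity of the elementary factors occurring in \eqref{kernel}. The two inputs are the functional equation of the theta function, $\Theta_p(pz)=-z^{-1}\Theta_p(z)$ — equivalently $\Theta_p(z/p)=-(z/p)\,\Theta_p(z)$ — which follows at once from the product expression for $\Theta_p$ given after \eqref{kernel}, together with the corresponding relations for the Jacobi-type factor, $\vartheta_\nu(pz,p)=p^{-1}z^{-2}\vartheta_\nu(z,p)$ and $\vartheta_\nu(z/p,p)=p^{-1}z^{2}\vartheta_\nu(z,p)$, which follow from $\vartheta_\nu(z,p)=\sum_{n\in\Z+\nu/2}p^{n^2}z^{2n}$ by shifting the summation index by $\pm 1$. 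Throughout one uses the choice \eqref{choose-s}, so that $\lambda=\tfrac1{2(k_3+2)}$, and the fact that the relevant nome is $p=p_3=q^{2(k_3+2)}$, i.e. $p^{\lambda}=q$; this last identity is exactly what will make the quasi-periodicity factor the scalar $q^2$ rather than an $h_{1,0}$-dependent operator.

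First I would record, for the shift $x_{0,j}\to p\,x_{0,j}$ of a single variable, the factor produced by each of the four structural ingredients of $K^{(\nu)}_{1,N}$. In the numerator only the $t=0$ theta factors carrying the index $j$ are affected — the ratios $x_{0,l}/x_{0,j}$ with $l>j$ get scaled by $p^{-1}$ and the ratios $x_{0,j}/x_{0,l}$ with $l<j$ by $p$ — so the functional equation turns this piece into an explicit monomial in $q$, $p$ and the $x_{0,l}$. In the denominator only the $N$ factors $\Theta_p(q^{k_1+2}x_{1,l}/x_{0,j},q^{-k_1}x_{1,l}/x_{0,j})$ with the chosen $j$ change, each argument being scaled by $p^{-1}$; since $q^{k_1+2}q^{-k_1}=q^2$, the product over $l$ produces a monomial in $q$, $p$, $x_{0,j}$ and the $x_{1,l}$ (and the $k_1$-dependence drops out). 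The factor $\vartheta_\nu\!\bigl(q^{h_{1,0}/2}\prod_l x_{1,l}/x_{0,l},p\bigr)$ has its argument scaled by $p^{-1}$, hence contributes $p^{-1}q^{h_{1,0}}\bigl(\prod_l x_{1,l}/x_{0,l}\bigr)^{2}$. Finally $x_{0,j}$ in the monomial prefactor produces $p^{\,1-2j+2j/(k_3+2)}$, while the operator factor $\bigl(\prod_l x_{1,l}/x_{0,l}\bigr)^{\lambda h_{1,0}}$ produces $p^{-\lambda h_{1,0}}$.

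The next step is to multiply the four contributions and simplify. The assertion reduces to three checks: (i) all dependence on the integration variables $x_{0,l}$, $x_{1,l}$ cancels among the four pieces; (ii) the operator factor $q^{h_{1,0}}$ coming from the shift of $\vartheta_\nu$ is cancelled exactly by $p^{-\lambda h_{1,0}}=q^{-2(k_3+2)\lambda h_{1,0}}=q^{-h_{1,0}}$ coming from $\bigl(\prod_l x_{1,l}/x_{0,l}\bigr)^{\lambda h_{1,0}}$ — this is precisely what forces the choice \eqref{choose-s} and is the content of the remark preceding the lemma; and (iii) the surviving scalar powers of $q$ and of $p=q^{2(k_3+2)}$ collapse to exactly $q^2$, with no dependence on $j$ or $N$. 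For the shift $x_{1,j}\to p\,x_{1,j}$ I would run the same argument with the obvious modifications: now the $t=1$ numerator factors and the denominator factors with $l=j$ are the ones that move, the relevant arguments are scaled by $p$ rather than $p^{-1}$ (so one uses $\Theta_p(pz)=-z^{-1}\Theta_p(z)$ in place of $\Theta_p(z/p)=\dots$), the argument of $\vartheta_\nu$ is scaled by $p$, so $\vartheta_\nu$ contributes $p^{-1}q^{-h_{1,0}}\bigl(\prod_l x_{1,l}/x_{0,l}\bigr)^{-2}$, and $\bigl(\prod_l x_{1,l}/x_{0,l}\bigr)^{\lambda h_{1,0}}$ picks up $p^{+\lambda h_{1,0}}$; the same three cancellations occur and again leave $q^2$.

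The only genuine obstacle is the bookkeeping in the middle step: one must track signs and the exponents of $q$, $p$ and the $x$'s with enough care to see the full cancellation of the $x$-dependence and to verify that the leftover constant is exactly $q^{2}$. Conceptually nothing is difficult once the two functional equations are in hand; the role of \eqref{choose-s} is simply to render trivial the $h_{1,0}$-dependent part of the quasi-periodicity factor, which is exactly the statement of the lemma.
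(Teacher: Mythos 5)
Your proposal is correct and follows exactly the paper's route: the paper's proof is the one-line remark that the claim ``can be checked directly using $\Theta_p(pz)=-z^{-1}\Theta_p(z)$, $\vartheta_\nu(pz;p)=p^{-1}z^{-2}\vartheta_\nu(z;p)$,'' and your argument is precisely that direct check carried out factor by factor, including the correct identification of the role of \eqref{choose-s} (i.e.\ $p_3^{\lambda}=q$) in cancelling the $h_{1,0}$-dependent part. No gaps; the bookkeeping you outline does close, with the residual constant $q^{2-4j}\,p^{2j/(k_3+2)}=q^{2}$.
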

\begin{proof}
This can be checked directly using  
$\Theta_p(pz)=-z^{-1}\Theta_p(z)$, 
$\vartheta_\nu(pz;p)=p^{-1}z^{-2}\vartheta_\nu(z;p)$.
\end{proof}

With the choice \eqref{choose-s}, 
and after normal ordering, the integrand of \eqref{G1}  comprises only
integer powers except for the factor $(\prod_{j=1}^N x_{1,j}/x_{0,1})^{u_{1,0}}$, 
where $u_{1,0}$ given in \eqref{Qu1s} simplifies to
\begin{align*}
&u_{1,0}=\frac{1}{k_3-k_2}\bigl(2a_{1,0}+a_{2,0}+a_{3,0}\bigr)\,.
\end{align*}
Hence the action of $U_q\widehat{\mathfrak{sl}}_2\subset \E_{2,1}$ and $\mathbb{G}^{(\nu)}_{1,N}$
are defined on sectors on which 
$2a_{1,0}+a_{2,0}+a_{3,0}$ has eigenvalues in $(k_3-k_2)\Z$.

Cyclically permuting the indices $(1,2,3)$ we obtain three kinds of 
integrals of motion $\mathbb{G}^{(\nu)}_{i,N}$ associated with
$\E_{2,i}$, $i=1,2,3$.  
These operators are well-defined 
on sectors $\cH_{N_1,N_2,N_3}$ ($N_i\in\Z$) 
on which  $a_{i,0}$ have eigenvalues 
\begin{align*}
a_{1,0}=\frac{3}{4}(k_2-k_3)N_1-\frac{1}{4}(k_3-k_1)N_2
-\frac{1}{4}(k_1-k_2)N_3
\quad \&\ cycl..
\end{align*}
\medskip

Even though the algebras $\E_{2,i}$ do not commute with each other, 
the corresponding integrals of motion do. The following ``triality'' is 
the main result of this paper. 
\begin{thm}\label{thm:triality}
On sectors $\cH_{N_1,N_2,N_3}$, 
the three kinds of integrals of motion 
are mutually commutative. Namely we have
\begin{align*}
&[\mathbb{G}^{(\mu)}_{i,M},\mathbb{G}^{(\nu)}_{j,N}]=0\quad 
(i,j=1,2,3\,, M,N\ge1\,, \mu,\nu=0,1)\,.
\end{align*}
\end{thm}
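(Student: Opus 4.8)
The plan is to reduce the triality statement to the known commutativity of integrals of motion \emph{within} a single quantum toroidal $\gl_2$ algebra, combined with the (anti-)commutation relations of Proposition \ref{prop:q-screening}. The starting point is formula \eqref{G1}: each $\mathbb{G}^{(\nu)}_{i,N}$ is a multiple contour integral whose integrand is, up to the explicit scalar kernel $K^{(\nu)}_{i,N}$, an ordered product of the bosonic screening currents $\rho_i(z)$ and $\tau_i(z)$. Thus the commutator $[\mathbb{G}^{(\mu)}_{i,M},\mathbb{G}^{(\nu)}_{j,N}]$ for $i\neq j$ is, after combining the two stacks of integrals into one, a single multiple integral of an ordered product of $\rho_i$'s, $\tau_i$'s, $\rho_j$'s and $\tau_j$'s against a product of the two kernels and the pairwise contraction functions.

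First I would treat the generic off-diagonal case $i\neq j$. By Proposition \ref{prop:q-screening} the currents $\rho_i(z),\tau_i(z)$ and $\rho_j(w),\tau_j(w)$ commute as \emph{operators} up to total derivatives in the normal-ordered exponents --- more precisely, the relevant contractions between $r_i^\pm$ (resp.\ $t_i^\pm$) and $r_j^\pm$ (resp.\ $t_j^\pm$) involve no singular factor, so the two stacks can be freely interchanged on the level of the integrand once the contours are arranged compatibly. The residual obstruction is then purely a statement about the scalar prefactor: one must check that the product $K^{(\mu)}_{i,M}(\mathbf{x};p_i)\,K^{(\nu)}_{j,N}(\mathbf{y};p_j)$ together with the inter-stack contraction factors is symmetric under the simultaneous exchange of the two ordered products, which follows from the theta-function identities ($\Theta_p(pz)=-z^{-1}\Theta_p(z)$ and the quasi-periodicity of Lemma \ref{lem:quasi}) and the fact that, since $p_i\neq p_j$ in general, the only poles pinching the contours are the visible ones listed after \eqref{G1}. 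So for $i\neq j$ the commutator vanishes by a contour-deformation argument: one slides the $x$-contours past the $y$-contours, picking up no residues because the relevant contractions are regular, and the two orderings then coincide.

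Next, the diagonal case $i=j$ is exactly the commutativity $[\mathbb{G}^{(\mu)}_{i,M},\mathbb{G}^{(\nu)}_{i,N}]=0$ within a single $\E_{2,i}$, which is the transfer-matrix result of \cite{FJM} transported through the representation $\pi_i$ and the rewriting of Proposition \ref{prop:rhoFF}; the only subtlety is that Proposition \ref{prop:rhoFF} matches $\rho_i,\tau_i$ with the dressed currents $F_1^{dress},F_0^{dress}$ only after the zero-mode correction $e^{\pm s\bar Q_{u_1}}z^{\mp\lambda h_{1,0}}$, and with the choice \eqref{choose-s} of $s$ the extra zero-mode factors and the quasi-periodicity of Lemma \ref{lem:quasi} conspire so that the integrand has integer powers except for the single $u_{1,0}$-exponent, which is constant on each sector $\cH_{N_1,N_2,N_3}$ and hence does not obstruct the contour manipulations. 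So on these sectors the diagonal commutativity is inherited from \cite{FJM} verbatim.

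The main obstacle I anticipate is the off-diagonal contour bookkeeping: although the operator-level interchange of $\rho_i$ with $\rho_j$ ($i\neq j$) is guaranteed by Proposition \ref{prop:q-screening}, one must verify that the \emph{integration cycles} defining $\mathbb{G}^{(\mu)}_{i,M}$ and $\mathbb{G}^{(\nu)}_{j,N}$ can be chosen simultaneously --- i.e.\ that there is a common domain in the parameters $q,k_1,k_2,k_3$ in which all the required ``inside/outside'' pole separations for both stacks hold at once, and that deforming one set of contours across the other produces no unexpected pinching. Establishing this compatibility, and checking that the combined scalar kernel is genuinely symmetric under the exchange (rather than merely quasi-symmetric up to a nontrivial multiplier depending on $h_{1,0}$, which the choice \eqref{choose-s} is designed to prevent), is where the real work lies; once that is in place, the vanishing of the commutator is a routine consequence of Cauchy's theorem and Proposition \ref{prop:q-screening}.
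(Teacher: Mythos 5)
Your off-diagonal argument rests on a false premise. You assert that for $i\neq j$ ``the relevant contractions between $r_i^\pm$ (resp.\ $t_i^\pm$) and $r_j^\pm$ (resp.\ $t_j^\pm$) involve no singular factor,'' so that sliding the $x$-contours past the $y$-contours picks up no residues. Table \ref{tab:rho-tau2} says otherwise: the nontrivial inter-stack contractions are rational functions with simple poles, e.g.\ $\rho_3^\pm(z)\rho_1^\pm(w)$ carries the factor $q^{\mp1}(1-q^{\mp k_1}w/z)/(1-q^{\mp(k_1+2)}w/z)$, and similarly for $\rho_3^\pm\tau_1^\pm$, $\tau_3^\mp\rho_1^\pm$, $\tau_3^\mp\tau_1^\pm$. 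Consequently the commutator $[\mathbb{G}^{(\mu)}_{1,M},\mathbb{G}^{(\nu)}_{2,N}]$ is \emph{not} zero by a pole-free contour deformation; it is a sum of residues at these simple poles, and the entire content of the proof is showing that those residues cancel. You are also over-reading Proposition \ref{prop:q-screening}: it asserts commutativity of the \emph{integrated} screenings $\rho_i,\tau_i$, and its own proof is precisely a residue-cancellation argument after a contour shift — it does not say the currents commute at the level of integrands, and in any case it does not apply directly once the nontrivial kernels $K^{(\mu)}_{1,M}$, $K^{(\nu)}_{2,N}$ are inserted under the integral.

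The missing mechanism is the following. One enumerates the four types of singular contractions between the two stacks. In three of the four cases, the residues from the two orderings cancel because the exponents at the pole locations coincide, via identities such as $r^+_3(w)-r^-_3(w)=-r^+_1(q^{k_1+2}w)+r_1^-(q^{-k_1-2}w)$ and $t^+_3(w)-t^-_3(w)=r^+_1(w)-r^-_1(w)$, which follow from \eqref{r1r1-1}--\eqref{t1t1-2}. In the fourth case ($\tau_3^\mp$ against $\tau_1^\pm$) the cancellation only occurs after shifting one variable by a full quasi-period, using $p_1q^{k_2}=p_3^{-1}q^{-k_2}$; this shift is legitimate only because the product of kernels is invariant under $x_{0,j}\to p_3^{-1}x_{0,j}$, $y_{0,l}\to p_1 y_{0,l}$, which is exactly what Lemma \ref{lem:quasi} and the choice \eqref{choose-s} deliver (the quasi-periodicity multiplier $q^2$ is independent of $h_{1,0}$ and cancels between the two kernels). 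You correctly sense that the choice of $s$ and Lemma \ref{lem:quasi} matter and that contour bookkeeping is delicate, but you attach them to the wrong step (an alleged symmetry of the kernel under exchange) rather than to the quasi-period shift needed in the fourth residue cancellation. Your diagonal case $i=j$ is fine and matches the paper. As written, the off-diagonal case — which is the theorem's actual content — does not go through.
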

\begin{proof}
When $i=j$ the commutativity is known (and is evident by construction). 
So it suffices to consider $i=1,j=2$. 
The product 
$\mathbb{G}^{(\mu)}_{1,M}\mathbb{G}^{(\nu)}_{2,N}$
is an integral 
\begin{align*}
&\int\!\cdots\!\int\prod_{1\le j\le M\atop t=0,1}
\frac{dx_{t,j}}{x_{t,j}}\prod_{1\le l\le M\atop t=0,1}
\frac{dy_{t,l}}{y_{t,l}}\, K^{(\mu)}_{1,M}(\mathbf{x};p_3)
 K^{(\nu)}_{2,N}(\mathbf{y};p_1)
\\
&\times
\prod_{1\le j\le M}^{\curvearrowright} x_{0,j}\tau_{3}(x_{0,j})
\prod_{1\le j\le M}^{\curvearrowright}x_{1,j}\rho_{3}(x_{1,j})
\prod_{1\le l\le N}^{\curvearrowright} y_{0,l}\tau_{1}(y_{0,l})
\prod_{1\le l\le N}^{\curvearrowright}y_{1,l}\rho_{1}(y_{1,l})\,.
\end{align*}
From Table \ref{tab:rho-tau2}, contractions of 
$\tau_3(x_{0,j})$ or $\rho_3(x_{1,j})$ with
$\tau_1(y_{0,l})$ or $\rho_1(y_{1,l})$ are rational functions 
with simple poles. 
The commutator $[\mathbb{G}^{(\mu)}_{1,M},\mathbb{G}^{(\nu)}_{2,N}]$
is a sum of terms obtained by taking residues from each such contraction.

Let us use $z$, $w$ to represent variables that enter the relevant 
contraction. There are four cases to consider,
\begin{align*}
&\rho^\pm_3(z) \rho^\pm_1(w)
=q^{\mp 1}\frac{1-q^{\mp k_1}w/z}{1-q^{\mp(k_1+2)}w/z}:e^{r^\pm_3(z)+r^\pm_1(w)}:+\cdots\,,
\\
&\rho^\pm_3(z) \tau^\pm_1(w)
=q^{\pm 1}\frac{1-q^{\pm k_3}w/z}{1-q^{\pm(k_3+2)}w/z}
:e^{r^\pm_3(z)+t^\pm_1(w)}:+\cdots\,,
\\
&\tau^\mp_3(z) \rho^\pm_1(w)
=q^{\pm 1}\frac{1-q^{\mp 2}w/z}{1-w/z}:e^{t^\mp_3(z)+r^\pm_1(w)}:+\cdots\,,
\\
&\tau^\mp_3(z) \tau^\pm_1(w)
=q^{\mp 1}\frac{1-q^{\pm (k_2+2)}w/z}{1-q^{\pm k_2}w/z}
:e^{t^\pm_3(z)+t^\pm_1(w)}:+\cdots\,.
\end{align*}
In the first three cases, integration with respect to $z$
yields the same term with opposite signs, 
due to the identities which follow from \eqref{r1r1-1}--\eqref{t1t1-2}:
\begin{align*}
&r^+_3(w)-r^-_3(w)=-r^+_1(q^{k_1+2}w)+r_1^-(q^{-k_1-2}w)\,, 
\\
&r^+_3(q^{k_3+2}w)-r_3^-(q^{-k_3-2}w)=-t_1^+(w)+t_1^-(w)\,, 
\\
&t^+_3(w)-t^-_3(w)=r^+_1(w)-r^-_1(w)\,.
\end{align*}
Therefore the residues sum up to zero. 
In the last case we need a shift of arguments. 
Noting that $p_1q^{k_2}=p_3^{-1}q^{-k_2}$, we have
\begin{align*}
&\bigl(t^-_3(q^{k_2}w)+t^+_1(w)\bigr)\bigl|_{w\to p_1w}
-\bigl(t^+_3(q^{-k_2}w)+t^-_1(w)\bigr)\\
&=-\bigl(t^+_3(q^{-k_2}w)-t^-_3(p_3^{-1}q^{-k_2}w)\bigr)
+\bigl(t^+_1(p_1 w)-t^-_1(w)\bigr)=0\,.
\end{align*}
Thanks to Lemma \ref{lem:quasi},
the product of the kernel functions remains unchanged
under this shift,  
\begin{align*}
K^{(\mu)}_{3,M}(\mathbf{x};p_3)\bigl|_{x_{j,0}\to p_3^{-1}x_{j,0}}
K^{(\nu)}_{1,N}(\mathbf{y};p_1)\bigl|_{y_{l,0}\to p_1 y_{l,0}}
=K^{(\mu)}_{3,M}(\mathbf{x};p_3)K^{(\nu)}_{1,N}(\mathbf{y};p_1)\,.
\end{align*}
Case by case analysis shows that the contours also 
match after shift.  
\end{proof}

\section{$D(2,1;\alpha)$ structure}\label{sec:D21}
Finally we make some comments regarding the deformed
fermionic screening currents and the possible connection with 
the ``quantum toroidal $D(2,1;\alpha)$'' algebra.

Let us begin with the affine superalgebra $\hat{D}(2,1;\alpha)$ with
$\alpha={k_1}/(k_3+2)$. 
We choose all simple roots $\alpha_i$ to be fermionic, that is 
$(\alpha_i,\alpha_i)=0$,
so that the Dynkin diagram is as in figure \ref{fig1} below.
The quantum affine superalgebra
$U_\ssq\hat{D}(2,1;\alpha)$ has been studied in \cite{HSTY}, where 
the Drinfeld realization is obtained. 
Here we take the deformation parameter as $\ssq=q^{-k_3-2}$. 

We are interested only in the Borel subalgebra of 
$U_\ssq\hat{D}(2,1;\alpha)$. The latter is generated by 
commutative elements $\ssh_{i,n}$ and fermionic currents $\xi_i(z)$ 
($i=1,2,3$, $n>0$), satisfying
\begin{align}
&[\ssh_{i,n},\xi_j(z)]=\frac{[(\alpha_i,\alpha_j)n]}{n}z^n\, \xi_j(z)
\,,\label{D21-1}\\
&(z-q^{(\alpha_i,\alpha_j)}w)\xi_i(z)\xi_j(w)
=(w-q^{(\alpha_i,\alpha_j)}z)\xi_j(w)\xi_i(z)\,,
\label{D21-2}\\
&[(\alpha_1,\alpha_3)][\![ [\![  \xi_1(z_1),\xi_2(z_2) ]\!], \xi_3(z_3) ]\!]
=[(\alpha_1,\alpha_2)][\![ [\![ \xi_1(z_1),\xi_3(z_3) ]\!], \xi_2(z_2) ]\!]
\quad \& \ cycl.\,.
\label{D21-3}
\end{align}
Here, for elements $X$, $Y$
of parity ${\bar X}, {\bar Y}\in\{0,1\}$ 
and weight $\wt X, \wt Y$ we set
\begin{align*}
[\![X,Y]\!]=XY-(-)^{\bar X \bar Y}q^{-(\wt X,\wt Y)}YX \,.
\end{align*}

To our knowledge, quantum toroidal version of the $D(2,1;\alpha)$ 
algebra has not been defined in the literature. 
Guided by the usual construction of quantum toroidal algebras, 
we consider an algebra $\textsf{B}$ generated by 
symbols  $\ssh_{i,n}$, $\xi_i(z)$ for $i=0,1,2,3$
satisfying \eqref{D21-1}--\eqref{D21-3}
for all $i,j,k\in\{0,1,2,3\}$.
We take $\textsf{B}$ 
as a working definition of the Borel part of the quantum toroidal
$D(2,1;\alpha)$ algebra.
\bigskip

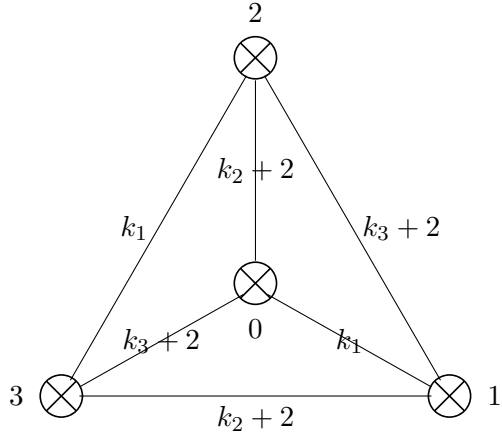
\begin{figure}[H]
\begin{center}
\begin{tikzpicture}[scale=0.6]
\coordinate  (A) at  (0,5);
\coordinate  (B) at (0,0);
\coordinate   (C) at (4.3,-2.5);
\coordinate  (D) at  (-4.3,-2.5);
\draw (0,4.5) --  node {$k_2+2$} (0,0.5);
\draw (0.2,4.6) -- node[right]{$k_3+2$} (4.1,-2.15);
\draw (-0.2,4.6) -- node[left]  {$k_1$}(-4.1,-2.15);
\draw (0.27,-0.25) -- node  {$k_1$} (3.9,-2.3);
\draw (-0.27,-0.25) -- node {$k_3+2$}  (-3.9,-2.3);
\draw  (3.9,-2.5) --node[below] {$k_2+2$} (-3.9,-2.5);
\node at (A) {\Huge $\otimes$};
\node at (B) {\Huge$\otimes$};
\node at (C)   {\Huge$\otimes$};
\node  at (D)  {\Huge$\otimes$};
\node at (0,6) {2};
\node at (0,-1) {0};
\node at  (-5.3,-2.5) {3};
\node at  (5.3,-2.5) {1};
\end{tikzpicture}
\caption{Dynkin diagram of $\hat{D}(2,1;\alpha)$, $\alpha=\frac{k_1}{k_3+2}$
The numbers inscribed on the edges indicate the scalar
product of roots, e.g., $(\alpha_0,\alpha_1)=k_1$. 
 \label{fig1}}
\end{center}
\end{figure}
\bigskip

It is not hard to find a free field realization of $\textsf{B}$. 
Let 
\begin{align*}
\varepsilon_i=\begin{cases}
1 & i=0,1\,,\\
-1 & i=2,3 \,, \\
\end{cases}
\end{align*}
and define matrices  $A,B$ by 
\begin{align*}
&A_{i,j}=\varepsilon_i\varepsilon_j 
q^{-(\alpha_i,\alpha_j)\varepsilon_i\varepsilon_jn}\,, \quad
B_{i,j}=q^{(\alpha_i,\alpha_j)n}-q^{-(\alpha_i,\alpha_j)n}\,,
\quad i,j\in\{0,1,2,3\}.
\end{align*}
Introducing one pair of zero mode $Q,a$ with $[a,Q]=1$, 
we consider fields 
\begin{align*}
\ssx_i(z)=\varepsilon_i \bigl(Q+a\log z\bigr) -
\sum_{n\neq 0}\frac{\ssx_{i,n}}{n}z^{-n} \,, 
\quad i=0,1,2,3\,,
\end{align*}
where the oscillators $\ssx_{i,n}$ satisfy
\begin{align*}
[\ssx_{i,n},\ssx_{j,-n}]=n A_{i,j}\,.
\end{align*}
Then the following assignment 
gives a realization of \eqref{D21-1}--\eqref{D21-3} with $i=0,1,2,3$. 
\begin{align*}
&\xi_i(z)\mapsto :e^{\ssx_i(z)}:\,,
\\
&(q-q^{-1})n\ssh_{i,n}\mapsto \sum_{j=0}^3(BA^{-1})_{i,j}\ssx_{j,n}\,.  
\end{align*}

The fermionic fields $a_i(z)$ considered in the previous sections can be 
viewed as currents dressed from $\ssx_i(z)$ by $\ssh_{i,n}$. 
Indeed, setting 
\begin{align*}
\ssa_{i,n}&=\ssx_{i,n}-(q-q^{-1})\frac{q^{2n}}{1-q^{2n}}n\ssh_{i,n}\,, 
\\
\ssa_{i,-n}&=\ssx_{i,-n}\,,
\end{align*}
for $n>0$, we obtain
\begin{align*}
[\ssa_{i,n},\ssa_{j,-n}] =n
\frac{[((\alpha_i,\alpha_j)+\varepsilon_i\varepsilon_j)n]}{[n]}\,.
\end{align*}
These are exactly the relations \eqref{aii},\eqref{a12},\eqref{a01}
for the oscillator part of $a_i(z)$.

The quantum toroidal $D(2,1;\alpha)$ algebra 
and its commutative subalgebra
warrant further investigation. 
For that purpose the shuffle algebra approach might be useful. 
We hope to return to this subject in the future.

\bigskip

\appendix
\section{Quantum toroidal $\gl_2$ algebra}\label{sec:toridal}
We give here the defining relations of quantum toroidal $\gl_2$ 
algebra $\E_2=\E_2(q_1,q_2,q_3)$. 

Let $(a_{i,j})=
\begin{pmatrix}2 &-2\\-2&2\\ \end{pmatrix}$ be the Cartan matrix of type $A^{(1)}_1$.
Define $q,d$ by $q_2=q^2$, $q_1=d/q$ and $q_3=1/(dq)$. 
For $r\neq 0$ we set
\begin{align*}
&a_{i,j}(r)=\frac{[r]}{r} \times 
\begin{cases}
	      q^r+q^{-r} & (i\equiv j \bmod 2),\\
              -d^r-d^{-r}& (i\not\equiv j \bmod2).
\end{cases}
\end{align*}
We set further
\begin{align*}
g_{i,j}(z,w)=\begin{cases}
	      z-q_2w & (i\equiv j \bmod 2),\\
              (z-q_1w)(z-q_3w)& (i\not\equiv j \bmod2).
	     \end{cases}
\end{align*}
Algebra $\E_2(q_1,q_2,q_3)$ is generated by elements
$E_{i,k},F_{i,k},H_{i,r}$ 
and invertible elements $K_i$, $C$, where
$i=0,1$, $k\in\Z$, $r\in\Z\backslash\{0\}$. 
We present below the defining relations 
in terms of generating series
\begin{align*}
&E_i(z)=\sum_{k\in\Z}E_{i,k}z^{-k}, 
\quad
F_i(z)=\sum_{k\in\Z}F_{i,k}z^{-k},\\
&K^{\pm}_i(z)=K_i^{\pm1}\bar{K}^{\pm}_i(z)\,,
\quad
\bar{K}^{\pm}_i(z)=
\exp\Bigl(\pm(q-q^{-1})\sum_{r>0}H_{i,\pm r}z^{\mp r}\Bigr)\,.
\end{align*}
The relations are as follows.

\bigskip

\noindent{\bf $C,K$ relations}
\begin{align*}
&\text{$C$ is central},\quad K_0K_1=1\,.
\\
&K_i E_j(z) K_i^{-1}=q^{a_{i,j}}E_j(z)\,,
\quad
K_i F_j(z) K_i^{-1}=q^{-a_{i,j}}F_j(z)\,.
\end{align*}
\bigskip

\noindent{\bf $H$-$H$, $H$-$E$ and $H$-$F$ relations}\quad
For $r\neq 0$, 
\begin{align*}
&[H_{i,r},E_j(z)]= a_{i,j}(r)C^{-(r+|r|)/2} \,z^r E_j(z)\,,\\
&[H_{i,r},F_j(z)]=-a_{i,j}(r)C^{-(r-|r|)/2} \,z^r F_j(z)\,,\\
&[H_{i,r},H_{j,s}]=\delta_{r+s,0} a_{i,j}(r)\frac{C^r-C^{-r}}{q-q^{-1}} \,.
\end{align*}
\bigskip

\noindent{\bf $E$-$F$ relations}
\begin{align*}
&[E_i(z),F_j(w)]=\frac{\delta_{i,j}}{q-q^{-1}}
(\delta\bigl(C\frac{w}{z}\bigr)K_i^+(w)
-\delta\bigl(C\frac{z}{w}\bigr)K_i^-(z))\,.
\end{align*}
\bigskip

\noindent{\bf $E$-$E$ and $F$-$F$ relations}
\begin{align*}
&(-1)^{i+j}g_{i,j}(z,w)E_i(z)E_j(w)+g_{j,i}(w,z)E_j(w)E_i(z)=0, 
\\
&(-1)^{i+j}g_{j,i}(w,z)F_i(z)F_j(w)+g_{i,j}(z,w)F_j(w)F_i(z)=0.
\end{align*}
\bigskip

\noindent{\bf Serre relations}
For $i\neq j$,
\begin{align*}
&\Sym_{z_1,z_2,z_3}[E_i(z_1),[E_i(z_2),[E_i(z_3),E_j(w)]_{q^2}]]_{q^{-2}} =0\,,
\\
&\Sym_{z_1,z_2,z_3}[F_i(z_1),[F_i(z_2),[F_i(z_3),F_j(w)]_{q^2}]]_{q^{-2}} =0\,.
\end{align*}
In the above we set $[A,B]_p=AB-pBA$ and 
\begin{align*}
&\Sym\ f(x_1,\dots,x_N) =\frac{1}{N!}
\sum_{\pi\in\GS_N} f(x_{\pi(1)},\dots,x_{\pi{(N)}})\,.
\end{align*}
\bigskip

We identify the quantum affine algebra  $U_q\widehat{\mathfrak{sl}_2}$ 
with the subalgebra of $\E_2(q_1,q_2,q_3)$ generated by 
$E_1(z),F_1(z), H_{1,r}$ and $K_1^{\pm1},C^{\pm1}$. 

\section{Comparison with Wakimoto representation}\label{sec:Wakimoto}
In this section we explain the connection between 
the representation of $\E_2$ in section \ref{sec:toroidal gl2}
and the Wakimoto representation of $U_q\widehat{\mathfrak{sl}_2}$. 

In \cite{Sh},  the Wakimoto representation of level $q^k$
is written 
in terms of the generators
\begin{align*}
&zJ^+(z) =E_1(q^{-k-2}z)\,,
\quad 
zJ^-(z) =F_1(q^{-k-2}z)\,,
\\
&J^3_n=q^{kn/2}H_{1,n}\,,
\quad \psi(z)=K^+_1(q^{-k/2}z)\,,
\quad \varphi(z)=K^-_1(q^{-k/2}z)\,,
\end{align*}
using three free fields $a(z),b(z),c(z)$. 
Non-trivial commutation relations among the 
Fourier modes of the latter are
\begin{align}
&[a_0,Q_a]=2(k+2)\,,\quad -[b_0,Q_b]=[c_0,Q_c]=4\,,
\label{abc1}\\
&[a_n,a_{-n}]=\frac{[2n][(k+2)n]}{n}\,,\quad
-[b_n,b_{-n}]=[c_n,c_{-n}]=\frac{[2n]^2}{n}\,.
\label{abc2}
\end{align}
For $x=a,b,c$ we set 
\begin{align*}
&x(L;M,N|z;\alpha)=\frac{L}{MN}Q_x+\frac{L}{MN}x_0\log z
-\sum_{n\neq0}\frac{[Ln]}{[Mn][Nn]}x_nz^{-n}q^{\alpha|n|} \,,
\\
&x(N|z;\alpha)=x(L;L,N|z;\alpha)\,.
\end{align*}

\begin{thm}\label{thm:Sh}\cite{Sh}
The following formula gives the Wakimoto representation of 
$U_q\widehat{\mathfrak{sl}_2}$ of level $q^k$.
\begin{align*}
&-(q-q^{-1})zJ^+(z)=:e^{\tu^+(z)}: - :e^{\tu^-(z)}:\,,
\\
&(q-q^{-1})zJ^-(z)=:e^{\tv^+(z)}: - :e^{\tv^-(z)}:\,,
\\
&\psi(z)=q^{a_0+b_0}e^{(q-q^{-1})\sum_{n>0}J^3_nz^{-n}}\,,
\\
&\varphi(z)=
q^{-a_0-b_0}e^{-(q-q^{-1})\sum_{n>0}J^3_{-n}z^{n}}\,,
\end{align*}
where
\begin{align*}
&J^3_n=q^{2n-|n|}a_n+q^{(k+2)n-\frac{k+2}{2}|n|}b_n\,,
\\
&\tu^\pm(q^{k+2}z)=-b(2|z;1)-c(2|q^{\pm1}z;0)\,, 
\\
&\tv^\pm(q^{k+2}z)=a(k+2|q^{k+2\pm k}z;-\frac{k+2}{2})
-a(k+2|q^{k}z;\frac{k+2}{2})+b(2|q^{\pm(k+2)}z;-1)+c(2|q^{\pm(k+1)}z;0)\,.
\end{align*}

In addition the Wakimoto screening current is given by
\begin{align*}
&-(q-q^{-1})zJ^S(z)=:e^{\trr^+(z)}: - :e^{\trr^-(z)}:\,,\\
&\trr^\pm(q^{k+2}z)=
-a(k+2|q^{k}z;-\frac{k+2}{2})-b(2|z;-1)-c(2|q^{\pm1}z;0)\,.
\end{align*}
\qed
\end{thm}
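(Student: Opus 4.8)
The plan is to verify directly that the operators defined by the formulas of Theorem \ref{thm:Sh} satisfy all the defining relations of $U_q\widehat{\mathfrak{sl}}_2$ in the Drinfeld new realization, that is, the $C$-$K$, $H$-$H$, $H$-$E$, $H$-$F$, $E$-$F$ and $E$-$E$/$F$-$F$ relations listed in Appendix \ref{sec:toridal} for the rank-one subalgebra generated by $E_1(z),F_1(z),H_{1,r},K_1^{\pm1},C^{\pm1}$ (there are no Serre relations to check at rank one). Everything reduces to the pairwise contractions of the exponential fields $:e^{\tu^\pm(z)}:$, $:e^{\tv^\pm(w)}:$, $\psi(w)$, $\varphi(w)$, computed from the free-field commutators \eqref{abc1}--\eqref{abc2}. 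On a Fock module the central element $C$ acts by $q^k$, so the first step is simply to record the two-point functions $\langle \tu^\epsilon(z)\tu^{\epsilon'}(w)\rangle$, $\langle \tu^\epsilon(z)\tv^{\epsilon'}(w)\rangle$, etc., which are rational functions in $z/w$ with at worst simple poles on $z/w\in q^{\Z}$.

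The $C$-$K$, $H$-$H$, $H$-$E$ and $H$-$F$ relations are the easy part: since $J^3_n=q^{2n-|n|}a_n+q^{(k+2)n-\frac{k+2}{2}|n|}b_n$ is a linear combination of oscillators, the brackets $[J^3_m,\tu^\pm(z)]$ and $[J^3_m,\tv^\pm(w)]$ follow from \eqref{abc2} and the prescribed rescalings, and one checks that the resulting coefficients reproduce $a_{1,1}(r)=\tfrac{[r]}{r}(q^r+q^{-r})$ with the correct powers of $C$. Likewise the $E$-$E$ and $F$-$F$ quadratic relations of Appendix \ref{sec:toridal} hold because the four contraction functions entering $(:e^{\tu^+}:-:e^{\tu^-}:)(z)\,(:e^{\tu^+}:-:e^{\tu^-}:)(w)$ all carry the same factor $(z-q^2w)$ in the numerator, so the relation becomes an identity of rational prefactors.

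The crux is the $E$-$F$ relation. Here I would write $E_1(z)F_1(w)-F_1(w)E_1(z)$, put each product in normal-ordered form times its contraction function, and observe that the contraction functions have simple poles at $z=q^kw$ and $w=q^kz$. Using the formal identity $\frac{1}{1-q^kw/z}-\frac{1}{1-q^{-k}z/w}=\delta(q^kw/z)$, the commutator collapses onto delta-function terms, and one then checks, via identities of the type
\begin{align*}
\tu^+(z)-\tu^-(z)=(\text{a shift of }c(z))\,,\qquad
\tv^+(z)-\tv^-(z)=(\text{a shift of }b(z))\,,
\end{align*}
analogous to \eqref{r1r1-1}--\eqref{t1t1-2}, that the normal-ordered symbol evaluated at the pole is exactly $\psi(w)$ (resp.\ $\varphi(z)$) and that the non-singular remainder cancels between the two terms of each of $E_1$ and $F_1$. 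This last cancellation is where the precise two-term structure of the Wakimoto currents is used, and it is the step I expect to demand the most bookkeeping.

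Finally, for the screening current I would show that $\oint zJ^S(z)\,dz$ commutes with $E_1(w)$, $F_1(w)$ and $H_{1,r}$: the $H$-commutator vanishes because $[J^3_m,\trr^+(z)-\trr^-(z)]$ integrates to zero, and the $E_1$, $F_1$ commutators vanish because the OPE of $zJ^S(z)$ with $E_1(w)$ (resp.\ $F_1(w)$) is a total $w$-derivative --- the difference $:e^{\trr^+}:-:e^{\trr^-}:$ being arranged so that the two contraction prefactors differ by $\partial_w$ of a single normal-ordered expression. The main obstacle throughout is not conceptual but combinatorial: tracking the three free fields $a,b,c$, their rescalings $x(L;M,N|z;\alpha)$, and the powers of $q$ attached to each argument shift so that all the rational prefactors line up. A useful cross-check is the $q\to1$ limit, which must reproduce the classical Wakimoto realization of $\widehat{\mathfrak{sl}}_2$.
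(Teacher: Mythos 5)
The paper does not prove Theorem \ref{thm:Sh} at all: it is imported from \cite{Sh} and the statement is closed with a proof-end mark, so there is no in-paper argument to compare yours against. The proof on record, in \cite{Sh}, is exactly the direct verification you outline: compute the pairwise contractions of $:e^{\tu^\pm}:$, $:e^{\tv^\pm}:$, $\psi$, $\varphi$ from \eqref{abc1}--\eqref{abc2} and check the Drinfeld relations of Appendix \ref{sec:toridal}, with the $E$-$F$ relation handled by the residue/delta-function mechanism and the two-term cancellation you identify as the crux. So your strategy is the right one and is the only available route.

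Two points in the sketch need repair before the computation would close. First, for the $E$-$E$ and $F$-$F$ relations it is not true that all four contraction functions ``carry the same factor $(z-q^2w)$'': the mixed products with $\epsilon\neq\epsilon'$ have contractions genuinely different from the diagonal ones (the same phenomenon as the $f_{\epsilon_1,\epsilon_2}$ entries in the $\rho_1^{\epsilon_1}(z)\rho_1^{\epsilon_2}(w)$ contractions of Appendix \ref{sec:contraction}), and the quadratic relation only holds after summing all four terms and matching the pole/zero pattern of each; also the delta-function identity you quote should be the difference of the two expansions of one and the same rational kernel, not of a kernel and its inverse-variable partner. Second, and more importantly, the screening property of $J^S$ is not that its OPE with $E_1(w)$, $F_1(w)$ is a total $w$-derivative: in the deformed setting the commutator is a total $q$-\emph{difference} in the screening variable $z$ (with step $q^{\pm(k+2)}$), and the vanishing of $\oint zJ^S(z)\,dz$ is obtained by shifting the $z$-contour and cancelling the two exponential terms of $:e^{\trr^+}:-:e^{\trr^-}:$ --- precisely the mechanism used in the proof of Proposition \ref{prop:q-screening} via identities of the type \eqref{r1r1-1}--\eqref{r1r1-2}. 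An ordinary total derivative is only the $q\to1$ shadow of this, and a total derivative in $w$ would not make the $z$-integral vanish in any case. With these corrections the remaining work is the bookkeeping you already acknowledge.
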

It is also straightforward to find two fermionic screening currents 
for $U_q\widehat{\mathfrak{sl}_2}$ 
\begin{align*}
&\tilde{\sigma}_2(z)=:e^{a(1|q^{-2}z;-\frac{k+2}{2})+b(k+2;1,2|q^{-k-2}z;-1)
+c(k+1;1,2|q^{-k-2}z;0)}: \,,
\\
&\tilde{\sigma}_3(z)=:e^{c(2|q^{-k-2}z;0)}: \,.
\end{align*}
\medskip

Now we take $k=k_1$. 
Our task is to relate the free fields $a(z),b(z),c(z)$ 
with $a_1(z),a_2(z),a_3(z)$. 
To this end we identify 
$\sigma_i(z)$ with $\tilde{\sigma}_i(q^{k+2}z)$, $i=2,3$. 
Comparison with the CFT case suggests that they commute with the Heisenberg
subalgebra spanned by $H_{1,n}$, 
i.e., $[H_{1,n},a_{2,m}] =[H_{1,n},a_{3,m}]=0$. 
This fixes $H_{1,n}$ uniquely up to a scalar multiple.
Taking the normalization into account we obtain $H_{1,n}=h_{1,n}$ where
\begin{align}
h_{1,n}&=\frac{1}{n}
\Bigl([k_1n]a_{1,n}-[(k_2+2)n]a_{2,n}-[(k_3+2)n]a_{3,n}\Bigr)
\times
\begin{cases}
-\frac{[n]}{[(k_2+2)n]} & (n>0),\\
\frac{[n]}{[(k_3+2)n]} & (n<0).\\
\end{cases}
\label{h1n-S}
\end{align}
The CFT case suggests also that we compare 
the bosonic screening current $\rho_1(z)$ 
with  the Wakimoto screening current $-J^S(q^{k+2}z)$.
Equating the oscillator part and using
the relation $J^3_n=q^{kn/2}h_{1,n}$, we can solve 
$a_n,b_n,c_n$ ($n\neq0$) for $h_{1,n},a_{2,n},a_{3,n}$ as follows. 
\begin{align}
&nq^{-kn+\frac{k+2}{2}|n|}a_n=
\frac{[(k+2)n]}{[kn]}q^{\frac{k}{2}(|n|-n)-2n} n h_{1,n} 
-\frac{[n][2n]}{[kn]}a_{2,n}
+\frac{[(k+1)n][2n]}{[kn]}a_{3,n}\,,
\label{a-S}\\
&nq^{|n|}b_n=-\frac{[2n]}{[kn]}q^{-\frac{k}{2}(n+|n|)-2n} nh_{1,n}
+\frac{[n][2n]}{[kn]} a_{2,n}
-\frac{[(k+1)n][2n]}{[kn]} a_{3,n}\,,
\label{b-S}\\
&nc_n=[2n] a_{3,n}\,.
\label{c-S}
\end{align}
From the zero mode of $\sigma_i(z)$ we have
\begin{align*}
&\frac{1}{2}Q_c=Q_{a_3}\,,\quad 
Q_a+\frac{k+2}{2}Q_b+\frac{k+1}{2}Q_c=Q_{a_2}\,,
\\
&\frac{1}{2}c_0=a_{3,0}\,,\quad 
a_0+\frac{k+2}{2}b_0+\frac{k+1}{2}c_0=a_{2,0}\,.
\end{align*}
Thus we get
$-(q-q^{-1})E_1(z)=:e^{u_1^+(z)}:-:e^{u_1^-(z)}:$, 
$(q-q^{-1})F_1(z)=:e^{v_1^+(z)}:-:e^{v_1^-(z)}:$
with 
\begin{align*}
&u^\pm_1(z)=\tu^\pm(q^{k+2}z)=Q_{u_1}+u_{1,0}\log z
\mp a_{3,0}\log q-\sum_{n\neq0}\frac{u^\pm_{1,n}}{n}z^{-n}\,,
\\
&v^\pm_1(z)=\tv^\pm(q^{k+2}z)=
-\bigl(Q_{u_1}+u_{1,0}\log z\bigr)
\pm a_{2,0}\log q -\sum_{n\neq0}\frac{v^\pm_{1,n}}{n}z^{-n}\,.
\end{align*}
Formulas for $u^\pm_{1,n}$, $v^\pm_{1,n}$ are obtained
by using \eqref{h1n-S} and \eqref{a-S}--\eqref{c-S}. 
The result is given in \eqref{u1n} and \eqref{v1n}.

It remains to express the zero mode
\begin{align*}
&Q_{u_1}= -\frac{1}{2}\bigl(Q_b+Q_c\bigr), \quad
u_{1,0}=-\frac{1}{2}(b_0+c_0)\,,
\end{align*} 
in terms of $Q_{a_i}$ and $a_{i,0}$. We have
\begin{align*}
&[u_{1,0},Q_{u_1}]=0,\quad
[u_{1,0},Q_{a_2}]=1\,, \quad [u_{1,0}, Q_{a_3}]=-1\,,\\
&[a_{2,0},Q_{u_1}]=1\,, \quad [a_{3,0},Q_{u_1}]=-1\,.
\end{align*}
These equations admit a one-parameter family of solutions
\eqref{Qu1s}, \eqref{Q0u0}--\eqref{baru} given in the main text.

For any choice of $s$, we have an identification 
\begin{align*}
&Q_a=(k_1+2)Q_{u_1}+Q_{a_2}+Q_{a_3}\,,
\quad
Q_b=-2Q_{u_1}-2Q_{a_3}\,,\quad Q_c=2Q_{a_3}\,,
\\
&a_0=(k_1+2)u_{1,0}+a_{2,0}+a_{3,0}\,,
\quad b_0=-2u_{1,0}-2a_{3,0}\,,\quad c_0=2a_{3,0}\,,
\\
&h_{1,0}=a_0+b_0=-\frac{1}{1+k_1s}\bar u_{1,0}\,.
\end{align*}

Finally the bosonic screening current
is related to the Wakimoto screening current by
\begin{align*}
&\rho_1(q^{-k-2}z)=-J^S(z)\, q^{\frac{k}{k+2}a_0}\,.
\end{align*}
The factor $a_0$ does not affect the screening property
because it commutes with $J^\pm(z),J^3(z)$. 

\bigskip

\section{Contractions}\label{sec:contraction}
In this section we summarize contractions among various currents
used in the text. 
We write
\begin{align*}
&\rho_j^\pm(z)=:e^{r_j^\pm(z)}:\,, 
\quad \tau_j^\pm(z)=:e^{t_j^\pm(z)}:\,.
\end{align*}
In the following all relations hold true under simultaneous cyclic permutations of indices
$(1,2,3)$, so we write only typical cases and omit writing ``$\&\, cycl.$''.
\bigskip

\noindent{\it Contractions for $\sigma_i(z)$}\quad
\begin{align*}
&\sigma_i(z)\sigma_i(w)=(z-w):e^{a_i(z)+a_i(w)}:\,
,\\
&\sigma_0(z)\sigma_1(w)=z^{k_1+1}\frac{(q^{-k_1}w/z;q^2)_\infty}{(q^{k_1+2}w/z;q^2)_\infty}:e^{a_0(z)+a_1(w)}:\,,
\\
&\sigma_1(w)\sigma_0(z)
=w^{k_1+1}\frac{(q^{-k_1}z/w;q^2)_\infty}{(q^{k_1+2}z/w;q^2)_\infty}
:e^{a_0(z)+a_1(w)}:\,,
\\
&\sigma_2(z)\sigma_3(w)=z^{k_1+1}\frac{(q^{-k_1}w/z;q^2)_\infty}{(q^{k_1+2}w/z;q^2)_\infty}:e^{a_2(z)+a_3(w)}:\,,
\\
&\sigma_3(w)\sigma_2(z)=w^{k_1+1}
\frac{(q^{-k_1}z/w;q^2)_\infty}{(q^{k_1+2}z/w;q^2)_\infty}
:e^{a_2(z)+a_3(w)}:\,.
\end{align*}
\bigskip

\noindent{\it Contractions for 
$\sigma_i(z)$ and $\rho^\pm_j(z),\tau^\pm_j(z)$}\quad
In all cases we have equalities (in the sense of  matrix elements)
\begin{align*}
&\rho_j^\pm(z)\sigma_i(w)=-\sigma_i(w) \rho_j^\pm(z)\,,
\quad
\tau_j^\pm(z)\sigma_i(w)=-\sigma_i(w) \tau_j^\pm(z)\,.
\end{align*}
The contractions are given by the following Table \ref{tab:art}. 
The entries in the table are the 
coefficients $X_{A,B}(z,w)$ in the product
$A(z)B(w)=X_{A,B}(z,w):A(z)B(w):$,
where $A(z)$ is taken from the first column and 
$B(w)$ from the first row. A similar convention will be used 
for other tables. 

\begin{table}[H]  
\begin{center}
\begingroup
\renewcommand{\arraystretch}{1.5}
\begin{tabular}{|c|c|c|c|c|}
\hline
& $\sigma_0(w)$ &$\sigma_{1}(w)$ &  $\sigma_{2}(w)$ &  $\sigma_{3}(w)$ \\
\hline
$\rho^\pm_{1}(z)$ & $q^{\pm(k_3+1)}z-w$ &$q^{\pm(k_2+1)}z-w$ &$(q^{\mp(k_1+1)}z-w)^{-1}$ &$(q^{\mp 1}z-w)^{-1}$ \\
\hline
$\tau^\pm_{1}(z)$ &$(q^{\mp(k_1+1)}z-w)^{-1}$ &$(q^{\mp 1}z-w)^{-1}$ & $q^{\pm(k_3+1)}z-w$ &$q^{\pm(k_2+1)}z-w$ \\
\hline
\end{tabular} 
\endgroup
\end{center}
\caption{\label{tab:art}}
\end{table}
\bigskip

\noindent{\it Contractions for 
$\rho_i^\pm(z)$ and $\tau_i^\pm(z)$ }\quad
\begin{align*}
&\rho^{\epsilon_1}_1(z)\rho^{\epsilon_2}_1(w)=z^{\frac{2}{k_1+2}}q^{-\epsilon_1}
\frac{(p_1q^{-2}w/z;p_1)_\infty}{(q^2w/z;p_1)_\infty}f_{\epsilon_1\epsilon_2}(w/z)
\, :e^{r_1^{\epsilon_1}(z)+r_1^{\epsilon_2}(w)}:\,,
\\
&\tau^{\epsilon_1}_1(z)\tau^{\epsilon_2}_1(w)=z^{\frac{2}{k_1+2}}q^{-\epsilon_1}
\frac{(p_1q^{-2}w/z;p_1)_\infty}{(q^2w/z;p_1)_\infty}f_{\epsilon_1\epsilon_2}(w/z)
\, :e^{t_1^{\epsilon_1}(z)+t_1^{\epsilon_2}(w)}:\,,
\\
&\tau^{\epsilon_1}_1(z)\rho^{\epsilon_2}_1(w)=z^{-\frac{2}{k_1+2}}q^{\epsilon_1}
\frac{(q^{-k_2}w/z,p_1q^{k_2+2}w/z;p_1)_\infty}
{(q^{-k_2-2}w/z,p_1q^{k_2}w/z;p_1)_\infty}g_{\epsilon_1\epsilon_2}(w/z)
\, :e^{t_1^{\epsilon_1}(z)+r_1^{\epsilon_2}(w)}:
\,,\\
&\rho^{\epsilon_1}_1(z)\tau^{\epsilon_2}_1(w)=z^{-\frac{2}{k_1+2}}q^{\epsilon_1}
\frac{(q^{-k_2}w/z,p_1q^{k_2+2}w/z;p_1)_\infty}
{(q^{-k_2-2}w/z,p_1q^{k_2}w/z;p_1)_\infty}g_{\epsilon_1\epsilon_2}(w/z)
\, :e^{r_1^{\epsilon_1}(z)+t_1^{\epsilon_2}(w)}:
\,,\\
\end{align*}
where
\begin{align*}
&f_{\pm,\pm}(z)=1-z\,,\quad f_{\pm,\mp}(z)=1-q^{\pm 2}z\,,
\\
&g_{\pm,\pm}(z)=1\,,\quad g_{\pm,\mp}(z)=\frac{1-q^{\mp(k_2+2)}z}{1-q^{\mp k_2}z}\,.
\end{align*}
\bigskip

\noindent{\it Contractions for $\rho_i^\pm(z)$ and $\tau_j^\pm(z)$ 
($i\neq j$)}\quad

Non-trivial contractions are as follows, where double-sign correspondence 
is implied (notice the upper index in $\tau^\mp_1$).
\begin{table}[H]  
\begin{center}
\begingroup
\renewcommand{\arraystretch}{2}
\begin{tabular}{|c|c|c|c|c|}
\hline
& $\rho^\pm_1(w)$ &$\rho^\pm_{2}(w)$ &  $\tau^\mp_{1}(w)$ &  $\tau^\pm_{2}(w)$ \\
\hline
$\rho^\pm_{1}(z)$ & $\ast$ &$\ds{q^{\mp1}\frac{z-q^{\mp k_2}w}{z-q^{\mp (k_2+2)}w}}$ & $\ast$ &
$\ds{q^{\pm1}\frac{z-q^{\pm k_1}w}{z-q^{\pm (k_1+2)}w}}$\\
\hline
$\rho^\pm_{2}(z)$ &$\ds{q^{\pm1}\frac{z-q^{\pm k_2}w}{z-q^{\pm (k_2+2)}w}}$&
$\ast$ &
$\ds{q^{\mp1}\frac{z-q^{\pm 2}w}{z-w}}$
&$\ast$ \\
\hline
$\ds{\tau^\mp_{1}(z)$ &$\ast$ & $\ds{q^{\pm1}\frac{z-q^{\mp 2}w}{z-w}}}$&
$\ast$ &
$\ds{q^{\mp1}\frac{z-q^{\pm (k_3+2)}w}{z-q^{\pm k_3}w}}$\\
\hline
$\tau^\pm_{2}(z)$ &$\ds{q^{\mp1}\frac{z-q^{\mp k_1}w}{z-q^{\mp (k_1+2)}w}}$
&$\ast$ &$\ds{q^{\pm1}\frac{z-q^{\mp (k_3+2)}w}{z-q^{\mp k_3}w}}$&
$\ast$ \\
\hline
\end{tabular} 
\endgroup
\end{center}
\caption{\label{tab:rho-tau2}}
\end{table}
Asterisks indicate cases with the same subindex, 
which have been treated before.
Contractions not shown in the Table (up to permutation of indices $1,2,3$) 
are trivial;
for example we have $\rho_1^\pm(z)\tau_2^\mp(w)=:e^{r_1^\pm(z)+t_2^\mp(w)}:$. 
\bigskip

\noindent{\it Contractions for $\rho_1^\pm(z)$, 
$\sigma_2(z)$, $\sigma_3(z)$ with generators of $\E_{2,1}$}\quad
We set
\begin{align*}
U^\pm_1(z)= :e^{u^\pm_1(z)}:\,,\quad 
V^\pm_1(z)= :e^{v^\pm_1(z)}:\,,\quad 
\Uh^\pm_1(z)= :e^{\uh^\pm_1(z)}:\,,\quad 
\Vh^\pm_1(z)= :e^{\vh^\pm_1(z)}:\,.
\end{align*}

We have equalities for $i=2,3$
\begin{align*}
&U^{\varepsilon_1}(z)\rho^{\varepsilon_2}_1(w)=
\rho^{\varepsilon_2}_1(w)U^{\varepsilon_1}(z)\,,
\quad
V^{\varepsilon_1}(z)\rho^{\varepsilon_2}_1(w)=
\rho^{\varepsilon_2}_1(w)V^{\varepsilon_1}(z)\,,
\\
&\Uh^{\varepsilon_1}(z)\rho^{\varepsilon_2}_1(w)=
\rho^{\varepsilon_2}_1(w)\Uh^{\varepsilon_1}(z)\,,
\quad
\Vh^{\varepsilon_1}(z)\rho^{\varepsilon_2}_1(w)=
\rho^{\varepsilon_2}_1(w)\Vh^{\varepsilon_1}(z)\,,
\\
&U^\pm(z)\sigma_i(w)=-\sigma_i(w) U^\pm(z)\,,\quad
V^\pm(z)\sigma_i(w)=-\sigma_i(w) V^\pm(z)\,,
\\
&\Uh^\pm(z)\sigma_i(w)=-\sigma_i(w) \Uh^\pm(z)\,,\quad
\Vh^\pm(z)\sigma_i(w)=-\sigma_i(w) \Vh^\pm(z)\,.
\end{align*}

\begin{table}[H]  
\begin{center}
\begingroup
\renewcommand{\arraystretch}{1.5}
\begin{tabular}{|c|c|c|c|c|}
\hline
& $\rho_1^\pm(w)$ &$\rho_1^\mp(w)$ &$\sigma_{2}(w)$ & $\sigma_{3}(w)$ \\
\hline
$U_1^\pm(z)$ & $\ds{q^{\pm1}\frac{1-q^{\mp2}w/z}{1-w/z}}$& $q^{\pm1}$
& $q^{\mp(k_1+1)}z-w$ & $\ds{\frac{1}{q^{\pm 1}z-w}}$ \\
\hline
$V_1^\pm(z)$ & $\ds{q^{\mp1}\frac{1-q^{\mp k_1}w/z}{1-q^{\mp(k_1+2)}w/z}}$
& $q^{\mp 1}$ 
& $\ds{\frac{1}{q^{\mp 1}z-w}}$ & $q^{\pm(k_1+1)}z-w$\\
\hline
$\Uh_1^\mp(z)$ & $\ds{q^{\mp 1}\frac{1-q^{\mp k_1}w/z}{1-q^{\mp(k_1+2)}w/z}}$
& $q^{\mp1}$ 
& $\ds{\frac{1}{q^{\mp 1}z-w}}$ & $q^{\pm(k_1+1)}z-w$\\
\hline
$\Vh_1^\mp(z)$ & $\ds{q^{\pm1}\frac{1-q^{\mp2}w/z}{1-w/z}}$
& $q^{\pm1}$
& $q^{\mp(k_1+1)}z-w$ & $\ds{\frac{1}{q^{\pm 1}z-w}}$\\
\hline
\end{tabular} 
\endgroup
\end{center}
\caption{\label{tab:UVsigma}}
\end{table}
\bigskip

{\bf Acknowledgments.}
The research of BF is supported by 
the Russian Science Foundation grant project 16-11-10316. 
MJ is partially supported by 
JSPS KAKENHI Grant Number JP16K05183. 
EM is partially supported by a grant from the Simons Foundation  
\#353831.

The authors would like to thank Kyoto University
for hospitality during their visits when this work was started. 

\bigskip

\end{document}